\def\RR{\mathbb R}
\def\CC{\mathbb C}
\def\ro{\mathring{r}}
\def\CP{\mathbb{CP}}
\def\eea{\end{eqnarray*}}
\DeclareMathOperator{\grad}{grad}
\DeclareMathOperator{\Aut}{Aut}
\DeclareMathOperator{\Hess}{Hess}
\newtheorem{main}{Theorem}
\newtheorem{defn}{Definition}
\newtheorem{thm}{Theorem}
\newtheorem{prop}{Proposition}
\newtheorem{lem}{Lemma}
\newenvironment{proof}{\medskip \noindent
{\bf Proof.}}{\hfill \rule{.5em}{1em}
\\}
\newenvironment{ack}{\mbox{ }\\{\bf  Acknowledgments}\mbox{ }}{
\hfill \mbox{}\bigskip}
\begin{document}
\sloppy

\title{The Einstein-Maxwell Equations, K\"ahler Metrics,  and Hermitian Geometry}

\author{Claude LeBrun\thanks{Supported 
in part by  NSF grant DMS-1205953.} 
\\ 
SUNY Stony
 Brook 
  }

\date{November 11, 2014; revised January 17, 2015}
\maketitle

\hspace{1.8in}
\begin{minipage}{3.3in}
\begin{quote} {\em To my dear friend and esteemed 
 colleague Paul Gauduchon, on the occasion of
his seventieth birthday. }
\end{quote}
\end{minipage}

\bigskip

\begin{abstract}
Any constant-scalar-curvature K\"ahler (cscK) metric on a complex 
surface may be viewed as a solution of the Einstein-Maxwell equations, and this allows one \cite{lebem,yujen} to produce
 solutions of these equations on any  $4$-manifold that arises as a compact complex surface
with $b_1$ even. However,   
  not all solutions of the Einstein-Maxwell equations on 
such manifolds arise in this way;  new  examples can be  constructed 
by means of  conformally  K\"ahler geometry. 
\end{abstract}

~

\hspace{3in} %

Let $M$ be a smooth compact oriented $4$-manifold, equipped with  a Riemannian metric $h$ and 
a  real-valued $2$-form $F$. One then says that the triple $(M,h,F)$ satisfies the 
{\em Einstein-Maxwell equations} if the relations 
\begin{eqnarray}
\label{closed}dF&=&0\\
\label{coclosed}d \star F&=&0\\
\label{energy}\Big[r+ F\circ F\Big]_0
&=&0
\end{eqnarray}
all hold, where $r$ is the Ricci tensor of $g$,
the subscript 
$[~]_0$ indicates
the trace-free part with respect to $g$, 
and  the symmetic tensor 
$(F\circ F)_{jk}= {F_j}^\ell F_{\ell k}$ is obtained by
composing $F$ with itself as an endomorphism of $TM$. 
If $M$ is {\em compact}, equations (\ref{closed}--\ref{coclosed}) can be
unambiguously summarized as saying that $F$ is a {\em harmonic $2$-form},
but  \eqref{energy} is less familiar to differential geometers. 
In physics, these equations represent the interaction of a gravitational field $h$
and an electromagnetic field $F$; physicists would  call these the
``Euclidean Einstein-Maxwell equations with cosmological constant,''
 embellishing the terminology to emphasize that we have 
taken $h$ to be a Riemannian metric rather than a Lorentzian one, and that we 
are (implicitly) allowing the scalar curvature to be an arbitrary constant, rather than requiring it to vanish.

While relativists understand  these equations as the Euler-Lagrange equations of a suitable  Lagrangian, 
it was pointed out in \cite{lebem} that they
 also arise from a non-traditional variational problem 
that, while apparently unfamiliar to physicists,  is of immediate interest in  
 Riemannian geometry. Indeed, suppose that $M^4$ is compact,  let
$[\omega ]\in H^2(M,\RR)$ be a fixed cohomology class with $[\omega ]^2 > 0$, and 
let 
$\mathscr{G}_{[\omega]}$ be the Fr\'echet manifold of  smooth 
Riemannian metrics $g$ on $M$ for which 
the harmonic representative $\omega$ of $[\omega ]$ is self-dual with respect to the relevant metric 
$g$.
Letting $s$ denote the scalar curvature of a Riemannian metric, we
 can then consider the    {\em Einstein-Hilbert functional} 
 \begin{equation}\label{hilbert}
g\longmapsto \frac{\int_M s_g~d\mu_g}{\sqrt{\int_M ~d\mu_g}}
\end{equation}
as a functional on $\mathscr{G}_{[\omega]}$, rather than
 as a functional on the space
of {\em all} Riemannian metrics. A metric $h\in \mathscr{G}_{[\omega]}$ is then
a critical point for this problem if and only if  there is a harmonic $2$-form $F$ with self-dual part 
$F^+\in [\omega]$
such that the pair $(h,F)$ solves (\ref{closed}--\ref{energy}). Similarly, 
the critical points of the Calabi-type functional 
\begin{equation}
\label{calabi}
g\longmapsto {\int_M s_g^2~d\mu_g}
\end{equation}
on $\mathscr{G}_{[\omega]}$
are either Einstein-Maxwell or  scalar-flat. 

It is now worth  emphasizing   that equations (\ref{closed}--\ref{energy}) imply 
that the scalar curvature of $h$ is constant. Indeed, if $M$ is compact, this can be deduced
from the fact that if $h$ belongs to $\mathscr{G}_{[\omega]}$, so does its entire conformal 
class; and the restriction of \eqref{hilbert} to a conformal class is exactly the functional used in 
the Yamabe problem to identify metrics of constant scalar curvature. However, this fact 
about  (\ref{closed}--\ref{energy}) can also be  inferred directly, via a local calculation. Indeed, 
a contraction of the second Bianchi identity tells us that 
$$\nabla\cdot \ro = {\textstyle \frac{1}{4}}ds$$
in dimension $4$, where $\ro$ is the trace-free Ricci tensor, and where $s$ is the scalar curvature. 
On the other hand, \eqref{energy} can be rewritten as 
$$\ro = -2 F^+\circ F^-,$$
where 
$F^\pm = \frac{1}{2} [ F\pm \star F]$
denotes  the self-dual or anti-self-dual part of $F$, depending on the sign. 
But, as we shall see in \S \ref{overture} below, 
\begin{equation}
\label{leibniz}
\nabla\cdot (F^+ \circ F^-) = F^- (\nabla\cdot F^+)  + F^+  (\nabla \cdot F^-),
\end{equation}
and it therefore follows that $ds=0$ if $F^+$ and $F^-$ are both co-closed. The latter stipulation 
is of course exactly equivalent to equations (\ref{closed}--\ref{coclosed}).

The author's main point in  \cite{lebem} was that constant-scalar-curvature K\"ahler (cscK) metrics
on complex surfaces $(M^4,J)$ 
can be considered as solutions of the Einstein-Maxwell equations; moreover, 
when $M$ is compact and the scalar curvature is non-positive,  such solutions 
 are actually {\em minima} of \eqref{calabi} on $\mathscr{G}_{[\omega]}$,  rather than just critical points. 
 
 This article, however, will focus on another class  of solutions,  suggested by 
  a recent paper of  Apostolov, Calderbank and Gauduchon \cite{acg1}. We begin with a 
  definition that is ostensibly much weaker than theirs:

\begin{defn}
Let $J$ be an integrable almost-complex structure on $M$, thus making $(M^4,J)$ into a complex surface. 
We will say that a solution $(h,F)$ of the Einstein-Maxwell equations (\ref{closed}--\ref{energy}) 
on $(M,J)$  is {\em strongly Hermitian} if 
 $h$ and $F$ are  both invariant under the action of $J$:
\begin{eqnarray*}
h&=& h(J\cdot , J\cdot ) , \\
F&=& F ( J\cdot , J \cdot ).
\end{eqnarray*}
\end{defn}

Our first main result, proved in \S \ref{scenario} below,  asserts that, aside from a well-understood
 exceptional case, solutions of this type are in fact 
{\em  globally} of the type studied locally
by Apostolov-Calderbank-Gauduchon \cite{acg1}: 

\begin{main}\label{clef}
Let  $(h,F)$ be a strongly Hermitian solution of the Einstein-Maxwell equations 
 on a  (connected) complex surface $(M^4,J)$. Then either 
$h$ is  Einstein and anti-self-dual, or else there is a $J$-compatible K\"ahler metric $g$ 
on $M$ and  a real holomorphy potential 
$f > 0$ on $(M,J,g)$  such that $h=f^{-2}g$ has constant scalar curvature, and
such that 
$F^+$ is a constant times the 
K\"ahler form $\omega$ of $g$. Conversely, if $(M^4,g,J)$ is a K\"ahler manifold
and if $f>0$ is a real holomorphy potential such that $h=f^{-2}g$ has constant scalar curvature,
then there is a unique harmonic $2$-form $F$ on $M$ with $F^+=\omega$ such that 
$(h,F)$ solves the Einstein-Maxwell equations. 
\end{main}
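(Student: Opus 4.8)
The plan is to reduce everything to the pointwise algebra of the decomposition $F = F^+ + F^-$ into self-dual and anti-self-dual parts, together with the transformation law for the trace-free Ricci tensor under a conformal change. First I would record that, since $h$ is $J$-invariant, the two-form $\omega_h := h(J\cdot,\cdot)$ is self-dual of constant length, and the $J$-invariant two-forms split as $\RR\,\omega_h$ (self-dual) plus the primitive $(1,1)$-forms (anti-self-dual). Because $F$ is strongly Hermitian, this forces $F^+ = \lambda\,\omega_h$ for some function $\lambda$, while $F^-$ is primitive $(1,1)$. Next, harmonicity of $F$ (closed and co-closed) is equivalent to $dF^+ = 0$ and $dF^- = 0$ separately: add and subtract $dF = 0$ and $d\star F = d(F^+ - F^-) = 0$. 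Writing $d\omega_h = \theta\wedge\omega_h$ for the Lee form $\theta$, the identity $0 = d(\lambda\omega_h) = (d\lambda + \lambda\theta)\wedge\omega_h$ and the injectivity of $\alpha\mapsto\alpha\wedge\omega_h$ on one-forms give $d\lambda = -\lambda\,\theta$. Viewing this as a linear first-order ODE along curves, connectedness of $M$ yields the dichotomy: either $\lambda\equiv 0$, or $\lambda$ is nowhere zero.

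In the generic case $\lambda$ nowhere zero I would normalize $\lambda > 0$ and set $g := \lambda\,h$ and $f := \sqrt{\lambda}$, so that $h = f^{-2}g$. Then $\theta = -d\log\lambda$ is exact, which is exactly the statement that the Lee form of $g$ vanishes, i.e. $g$ is $J$-compatible Kähler, with $\omega_g = \lambda\,\omega_h$; consequently $F^+ = \lambda\,\omega_h = \omega_g$, a unit constant multiple of the Kähler form. That $h$ has constant scalar curvature is already established in the excerpt for any solution of (\ref{closed}--\ref{energy}). It remains to see that $f$ is a holomorphy potential, i.e. that $\Hess_g f$ is $J$-invariant. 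Here I would use that $\ro_h = -2F^+\circ F^-$ is $J$-invariant, since $(F^+)^\sharp = \lambda J$ and $(F^-)^\sharp$ both commute with $J$; combined with $\ro_g$ being $J$-invariant (Kähler) and the conformal law $\ro_h = \ro_g - 2\big[\Hess_g\phi - d\phi\otimes d\phi\big]_0$ with $\phi = -\log f$, whose bracket simplifies to $-\tfrac1f\Hess_g f$, taking $J$-anti-invariant parts gives $\big[\Hess_g f\big]^{-J} = 0$. Thus $\grad_g(Jf)$, more precisely $J\grad_g f$, is Killing and $f$ is a real holomorphy potential, completing the main alternative.

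In the exceptional case $\lambda\equiv 0$ we have $F^+\equiv 0$, so the energy equation collapses to $\ro_h = 0$ and $h$ is Einstein. The conformal argument is now unavailable, and I would instead appeal to the structure theory of Hermitian Einstein four-manifolds (Derdzinski): a Hermitian Einstein metric has $W^+$ of degenerate type, and is therefore either anti-self-dual or, globally by unique continuation, conformally Kähler. In the first case we land in the Einstein anti-self-dual alternative. In the second case $h = f^{-2}g$ with $g$ Kähler, and the holomorphy-potential computation of the previous paragraph applies verbatim (now with $\ro_h = 0$), so $h$ falls into the main alternative with $F^+ = 0$, a zero multiple of $\omega_g$. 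I expect the delicate point here to be the book-keeping when the scalar curvature vanishes, where the Ricci-flat Kähler case must be recognized as anti-self-dual via the Kähler curvature identity expressing $W^+$ as $\operatorname{diag}(s/6,-s/12,-s/12)$.

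For the converse I would start from $F^+ := \omega$. Since the Hodge star on two-forms is conformally invariant in dimension four, $\omega$ is self-dual and harmonic for $h$ as well as for $g$, so $F^+$ needs no adjustment. The energy equation then \emph{defines} the anti-self-dual part: because $F^-\mapsto F^+\circ F^-$ is a pointwise isomorphism from the primitive $(1,1)$-forms onto the trace-free $J$-invariant symmetric tensors, and because $\ro_h$ is $J$-invariant (here the hypothesis that $f$ is a holomorphy potential is used, exactly as above, to kill the $J$-anti-invariant Hessian term), there is a unique anti-self-dual $F^-$ with $\ro_h = -2\,\omega\circ F^-$, and it is automatically primitive $(1,1)$. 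The one genuine computation — and the main obstacle of the whole theorem — is to show $dF^- = 0$. The plan is to express $F^-$ explicitly through the primitive parts of the Ricci form $\rho_g$ and of $dd^c f$ (these being the two-form avatars of the $J$-invariant tensors $\ro_g$ and $\Hess_g f$ in the conformal law), and then to compute $dF^-$ using $d\rho_g = 0$, $d\,dd^c f = 0$, the Kähler identities, and the holomorphy-potential property; the surviving terms reduce to a multiple of $ds_h\wedge\omega$, which vanishes precisely because $h$ has constant scalar curvature. Thus the cscK condition is exactly the integrability condition for $dF^- = 0$. Finally $F := \omega + F^-$ is harmonic with $F^+ = \omega$ and solves (\ref{closed}--\ref{energy}) by construction, and uniqueness is immediate: any competitor has the same self-dual part $\omega$, hence the same $\omega\circ F^-$ by the energy equation, hence the same $F^-$ by injectivity.
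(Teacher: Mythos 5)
Your proposal is correct in substance, but it handles the central difficulty of the theorem --- the possible zero locus of $F^+$ --- by a genuinely different and more elementary mechanism than the paper. Where the paper sets $f=2^{-1/4}|F^+|^{1/2}$ and must then labor to show $f^{-1}(0)=\varnothing$, invoking B\"ar's nodal-set theorem \cite{baer} for the Dirac-type operator $d+d^*$ to get Hausdorff codimension $\geq 2$, extending the Killing field $J\grad_g f$ across the zero set (Lemma \ref{extend}), and deriving a contradiction from the extension of $f^{-1}$ (Lemma \ref{tip}), you keep the \emph{sign}: writing $F^+=\lambda\,\omega_h$ and $d\omega_h=\theta\wedge\omega_h$, closedness of $F^+$ and injectivity of $\alpha\mapsto\alpha\wedge\omega_h$ give the global linear equation $d\lambda=-\lambda\theta$, so that $\lambda(q)=\lambda(p)\exp\bigl(-\int_\gamma\theta\bigr)$ along any path, and on a connected $M$ the dichotomy ($\lambda\equiv 0$ or $\lambda$ nowhere zero) is immediate. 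This is shorter, needs no measure theory, and incidentally disposes of the paper's separate worry that the sign of $F^+$ relative to $\omega_h$ might differ on different components of the complement of the zero set. What it does not buy is the regularity bookkeeping: the paper's Proposition \ref{regular} (harmonic coordinates, bootstrapping \`a la DeTurck--Kazdan) is what entitles one to treat $h$, $F$, and hence $\lambda$ and $\theta$, as smooth, and to assert smoothness of $g$ and $f$ in the complex atlas; your argument tacitly assumes this. In the Einstein case you cite the same structure theory (Derdzi\'nski, Goldberg--Sachs, unique continuation for $W_+$) as the paper does at the end of \S\ref{scenario}, so there is no real difference there, and your remark about the Ricci-flat K\"ahler case being anti-self-dual is correct bookkeeping.

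The one step you flag as ``the main obstacle'' in the converse --- proving $dF^-=0$ --- is left as a sketch, and it is precisely where the paper is slickest. The first Proposition of \S\ref{overture} shows, via the spinorial Leibniz identity \eqref{leibniz} and the doubly-contracted Bianchi identity, that \eqref{self} and \eqref{matter} force $-\frac{1}{8}\,ds = F^+(\nabla\cdot F^-)$; since $F^+=\omega$ is closed and self-dual (hence co-closed) and pointwise invertible, constancy of $s_h$ yields $\nabla\cdot F^-=0$, hence $dF^-=0$ for the anti-self-dual form $F^-$, with no K\"ahler-identity computation whatsoever. Your planned expansion of $F^-$ through $\rho_g$ and $dd^cf$ would presumably terminate in the same place --- you correctly predict that the obstruction is $ds_h$ paired against $\omega$, which is exactly what the Bianchi identity says --- but as written this step is asserted rather than executed. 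Since you already quoted the Bianchi/Leibniz mechanism in the forward direction to conclude $s_h$ is constant, the efficient fix is simply to run that same identity backwards, as the paper does, rather than to carry out the $dd^cf$ computation. With that substitution (or with the computation actually performed), your argument is complete and correct.
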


The above result does not require that $M$ be compact, or  that
$h$ be complete. 
However, the statement can be simplified  in the compact case: 

\begin{main} \label{alto}
Let  $(h,F)$ be a strongly Hermitian solution of the Einstein-Maxwell equations 
on a compact complex surface $(M^4,J)$. Then 
there is a K\"ahler metric $g$ on $(M,J)$, together with a holomorphy potential 
$f> 0$ such that $h=f^{-2}g$, and such that  $F^+$ is a constant times the 
K\"ahler form $\omega$ of $g$.
\end{main}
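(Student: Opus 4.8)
The plan is to derive Theorem~\ref{alto} from Theorem~\ref{clef} by showing that, on a \emph{compact} complex surface, the exceptional case (in which $h$ is Einstein and anti-self-dual) can always be reabsorbed into the generic conclusion. First I would invoke Theorem~\ref{clef} directly: since $(M,J)$ is connected and compact, a strongly Hermitian solution $(h,F)$ either lands in the generic K\"ahler conclusion already, in which case there is nothing to prove, or else $h$ is Einstein and anti-self-dual. So the entire task reduces to treating this single exceptional family and exhibiting, for each such $h$, a $J$-compatible K\"ahler metric $g$ and a positive holomorphy potential $f$ with $h=f^{-2}g$ and $F^+$ a constant multiple of the K\"ahler form $\omega$.

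The strategy for the exceptional case rests on the observation that $h$ is \emph{both} Hermitian with respect to $J$ (since the solution is strongly Hermitian) \emph{and} anti-self-dual and Einstein. On a compact complex surface, anti-self-dual Einstein Hermitian metrics are extremely rigid: the relevant classification tells us that such an $h$ is, up to scale and after a conformal change, itself K\"ahler, or conformal to a K\"ahler metric with a distinguished holomorphy potential. Concretely, I would argue that the anti-self-duality of $h$ forces the conformally-related K\"ahler data to exist with $f$ \emph{constant}: taking $g=h$ (so $f\equiv 1$, trivially a positive holomorphy potential) reduces the claim to checking that an Einstein, anti-self-dual, $J$-invariant metric is in fact K\"ahler with respect to $J$, and that its harmonic self-dual two-form $F^+$ is proportional to the K\"ahler form. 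The key input here is that for a $J$-invariant metric the self-dual two-forms decompose as the span of $\omega$ together with the real and imaginary parts of a $(2,0)$-form; combining anti-self-duality with the Einstein condition pins down this structure.

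The main obstacle I anticipate is handling the degenerate possibility that the Einstein constant is such that $h$ is \emph{not} already K\"ahler for $J$ in the naive sense, but only conformally K\"ahler. In that situation I cannot simply set $g=h$, and I must instead produce the conformal factor explicitly. The natural tool is the fact, already used in Theorem~\ref{clef}, that on a Hermitian anti-self-dual Einstein surface the scalar curvature and the structure of $F$ conspire so that a conformal rescaling $g=f^{2}h$ by an appropriate holomorphy potential yields a genuine K\"ahler metric; compactness guarantees that $f$ can be normalized to be everywhere positive and smooth. I would verify that the resulting $\omega$ is parallel (hence $g$ is K\"ahler) and that $F^+$, being a self-dual harmonic form of the correct type, is forced by the classification of such forms to be a constant multiple of $\omega$.

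Thus the proof is essentially a case analysis feeding off Theorem~\ref{clef}, with all genuine content concentrated in showing that the anti-self-dual Einstein exceptional case is \emph{also} conformally K\"ahler with the stated properties; once that is established, the two branches of Theorem~\ref{clef} merge into the single clean statement of Theorem~\ref{alto}, and compactness is used only to guarantee positivity and smoothness of the conformal factor $f$ after normalization.
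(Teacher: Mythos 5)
Your overall strategy coincides with the paper's: the paper likewise proves Theorem~\ref{alto} by noting that the non-Einstein case is already settled by the theorem of \S\ref{scenario} (the generic branch of Theorem~\ref{clef}), and then disposes of the Einstein case by appealing to the literature --- Derdzi\'nski \cite{derd}, the Riemannian Goldberg--Sachs theorem \cite{aggs,nur}, and, crucially for the anti-self-dual case, Boyer's classification of compact anti-self-dual Hermitian surfaces \cite{boy2}, as assembled in \cite{lebhem}. So your reduction to the Einstein, anti-self-dual case via Theorem~\ref{clef}, followed by an appeal to ``the relevant classification,'' is the right skeleton, and the unnamed classification you need is Boyer's theorem.

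There is, however, a genuine soft spot in how you propose to finish the exceptional branch. When $h$ is Einstein and anti-self-dual, equation \eqref{matter} only forces $F^+\circ F^-=0$, so $F$ may be an arbitrary anti-self-dual harmonic form with $F^+\equiv 0$; moreover $W_+\equiv 0$ by hypothesis. Thus \emph{neither} $F$ nor the curvature supplies any tensor from which to manufacture a conformal factor, and your anticipated fallback --- that ``the scalar curvature and the structure of $F$ conspire'' to produce a holomorphy potential $f$ with $g=f^2h$ K\"ahler --- has no content in this case; the rescaling devices used in the generic case (by $|F^+|^{1/2}$, or by a power of $|W_+|$) are simply unavailable. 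What actually closes the argument is that this sub-case collapses: Boyer's theorem says a compact ASD Hermitian surface is either conformal to a scalar-flat K\"ahler metric or has odd $b_1$ (Hopf-type, with $\chi=\tau=0$); the latter surfaces admit no Einstein metrics at all (Hitchin--Thorpe forces flatness, which their fundamental groups exclude), while in the former case the conformal class has zero Yamabe constant, so by Obata's theorem the Einstein metric $h$ is itself scalar-flat, hence Ricci-flat, and differs from the K\"ahler representative by a constant factor. In other words, in the compact exceptional case $h$ is already Ricci-flat K\"ahler, $f\equiv 1$ works, and your pointwise decomposition argument then yields $F^+=c\,\omega$ with $c$ possibly zero. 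Note also that this shows compactness does far more than ``guarantee positivity and smoothness of $f$ after normalization,'' as you assert at the end: it is precisely what makes the global classification results applicable and the problematic sub-case vacuous. Your write-up should establish this vacuity (or cite \cite{lebhem}, as the paper does) rather than lean on the unworkable construction.
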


One consequence  is that any compact complex surface that admits a 
non-K\"ahler, strongly Hermitian solution of the Einstein-Maxwell equations must be 
rational or ruled; cf.\  \S \ref{conclusion} below for details. While it is beyond the scope of the present paper to try 
to classify the rational or ruled
surfaces which do actually admit such solutions, we will
  construct  non-trivial examples in \S \ref{construct}
that demonstrate that such solutions  really do exist  on at least one rational ruled surface:

\begin{main}\label{behold}
Let $[\omega ]$ be a  K\"ahler class  on $(M,J) = \CP_1 \times \CP_1$ 
for which the area of  one factor $\CP_1$ is more than quadruple the area of the other. 
Then $[\omega ]$ contains pairs of K\"ahler metrics which engender  two geometrically distinct solutions
of the Einstein-Maxwell equations 
(\ref{closed}--\ref{energy}) via Theorem \ref{clef}.
\end{main}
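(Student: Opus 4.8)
The plan is to exhibit the required solutions explicitly by reducing, via the Calabi ansatz, to a single second-order ODE in one real variable, and then to count the solutions of the resulting algebraic problem as the K\"ahler class varies. By Theorem \ref{clef}, it suffices to produce, inside the given class $[\omega]$, a $J$-compatible K\"ahler metric $g$ together with a positive holomorphy potential $f$ such that the conformally rescaled metric $h=f^{-2}g$ has constant scalar curvature; the harmonic $2$-form $F$ and the verification of (\ref{closed}--\ref{energy}) are then automatic from the converse half of Theorem \ref{clef}.

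First I would realize $\CP_1\times\CP_1$ as the trivial $\CP_1$-bundle $P(\mathcal{O}\oplus\mathcal{O})\to\CP_1$ and restrict attention to metrics invariant under the fiberwise circle action together with the full isometry group $SO(3)$ of a fixed round base. Such metrics are governed by the admissible ansatz of \cite{acg1}: with $x\in[-1,1]$ the moment coordinate of the fiber rotation, $\theta$ an angular fiber coordinate, and $\check g$ the fixed round base metric, one writes
$$g = (c+x)\,\check g + \frac{dx^2}{\Theta(x)} + \Theta(x)\,\theta^2 ,$$
where $c>1$ is fixed by the shape of the K\"ahler class and $\Theta$ is a positive ``momentum profile'' on $(-1,1)$. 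The moment map $x$ is precisely the Hamiltonian of the circle action, so the relevant holomorphy potentials are the affine functions $f=a+bx$, with $f>0$ on $[-1,1]$ requiring $a>|b|$. Smoothness of $g$ on the closed manifold forces the boundary conditions $\Theta(\pm1)=0$ and $\Theta'(\pm1)=\mp2$, while the ratio of the areas of the two $\CP_1$ factors is an explicit increasing function of $c$.

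Next I would impose $s_h\equiv\mathrm{const}$. Using the four-dimensional conformal transformation law for scalar curvature together with the known expression for $s_g$ in the ansatz, the condition that $h=f^{-2}g$ be CSC becomes a second-order \emph{linear} ODE for $\Theta$ whose coefficients are polynomials in $x$ built from $f=a+bx$ and the weight $(c+x)$. Integrating twice determines $\Theta$ explicitly as a rational expression in $x$ depending on $c$, the slope ratio $t=b/a$ of the conformal factor, the constant value of $s_h$, and two constants of integration. Since $s_h$ and the two integration constants enter $\Theta$ linearly, three of the four boundary conditions $\Theta(\pm1)=0,\ \Theta'(\pm1)=\mp2$ determine them affinely in terms of $t$ and $c$; substituting into the fourth condition yields a single polynomial equation $P_c(t)=0$, with $c$ as a parameter.

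The main obstacle, and the crux of the theorem, is the analysis of this governing equation. The root $t=0$ is always present and corresponds to the product cscK metric (a genuinely K\"ahler solution for every class); discarding it, I expect $P_c(t)/t$ to be quadratic in $t$, with discriminant positive precisely when the area ratio exceeds $4$, vanishing at the threshold ratio $4$, and negative below it. For each of the two roots $t_1\neq t_2$ produced past threshold one must then verify the positivity constraints $\Theta>0$ on $(-1,1)$ and $f>0$, so that the formal solution is a genuine smooth K\"ahler metric with a positive holomorphy potential and Theorem \ref{clef} applies; finally I would check that $(g_1,f_1)$ and $(g_2,f_2)$ are geometrically distinct, for instance by comparing the constant values of $s_h$ or the conformal factors. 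The delicate, quantitative part is matching the positivity thresholds to the clean numerical bound ``quadruple,'' whereas fixing the constants in the scalar-curvature formula for the ansatz is routine but error-prone.
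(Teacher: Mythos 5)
Your overall strategy -- an $S^1$-symmetric ansatz, reduction of the condition that $h=f^{-2}g$ have constant scalar curvature to a linear second-order ODE for the momentum profile, and a root count against the K\"ahler class -- is exactly the paper's, but two specific steps fail as written. First, the ansatz itself does not exist on $\CP_1\times\CP_1$. For a metric of the form $(c+x)\,\check g+\frac{dx^2}{\Theta(x)}+\Theta(x)\,\theta^2$, closedness of the associated $2$-form $\omega=(c+x)\,\omega_{\check g}+dx\wedge\theta$ forces $d\theta=\omega_{\check g}$, so the fiberwise circle bundle must have nonzero Euler class; a nonconstant base coefficient $(c+x)$ therefore puts you on a Hirzebruch surface $P(\mathcal{O}\oplus\mathcal{O}(k))$, $k\neq 0$, not on the trivial bundle $P(\mathcal{O}\oplus\mathcal{O})=\CP_1\times\CP_1$. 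On the trivial bundle the admissible ansatz degenerates to a genuine Riemannian product $g=g_1+g_2$, with $g_1=\frac{dx^2}{\Theta}+\Theta\, d\theta^2$ axisymmetric on the fiber sphere and $g_2$ a constant-curvature metric on the base; this is precisely the paper's setup, and with it your reduction does go through: in the paper's normalization ($f=t$ on an interval $0<a<t<b$) the CSC condition becomes $t^2\Psi''-6t\Psi'+12\Psi=\mathbf{c}t^2-\mathbf{d}$, solved by explicit quartics, and the four boundary conditions pin down everything.

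Second, the endgame misidentifies the distinct pair. Once you are in the product ansatz, the problem has the symmetry $x\mapsto -x$ (induced by the biholomorphism $z\mapsto 1/z$ of the fiber), so your polynomial $P_c(\tau)/\tau$ in the slope parameter $\tau$ is even and its two roots are $\pm\tau_0$; indeed, in the paper's variables $\tau_0^2=1-4\rho$, where $\rho<\frac{1}{4}$ is the ratio of the smaller to the larger factor area -- this is your discriminant picture, and it reproduces the ``quadruple'' threshold correctly. But the two roots $\pm\tau_0$ are pulled back to one another by that biholomorphism, hence lie in a single orbit of $\Aut_0(\CP_1\times\CP_1)$ and give isometric metrics $h$: they are \emph{not} geometrically distinct, and your proposed check (comparing the constants $s_h$) would find them equal. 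The pair demanded by the theorem is instead the cscK product metric (your root $\tau=0$, a product of round spheres) together with the one new solution $\tau=\pm\tau_0$, and distinctness must be argued differently; the paper does so by observing that the product of round spheres is a locally symmetric space, while the new conformally K\"ahler solution is not even locally symmetric. With the ansatz corrected to the product case and the distinctness argument redirected to this pair, your outline becomes the paper's proof.
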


In other words, if $[\omega ](\mathfrak{S}_1) > 4 [\omega ](\mathfrak{S}_2)$, where $\mathfrak{S}_1$ and $\mathfrak{S}_2$ are the
homology classes of the two factor $\CP_1$'s,  there are at least two different orbits of the action of 
$$\Aut_0(\CP_1\times \CP_1)= PSL(2, \CC ) \times PSL (2, \CC)$$ 
on  metrics in
the given K\"ahler class  which engender  strongly Hermitian solutions of the 
Einstein-Maxwell equations; moreover,  the Hermitian metrics $h$ arising from these two different 
orbits can  be distinguished from each other by their curvature properties.  
This indicates that the uniqueness theorems 
for cscK metrics on compact complex manifolds 
\cite{xxgang,donaldsonk1} unfortunately do not generalize to 
Einstein-Maxwell metrics in any obvious manner. 

These new examples have some curious incidental 
properties that almost seem like an invitation to  revisit  difficult open questions  
regarding the Yamabe problem on $S^2\times S^2$. A discussion of these issues and 
other open problems can be found  in the concluding  section of this article. 

\section{Preliminaries}
\label{overture}

Let us now  discuss some  basic facts 
needed to  provide a solid foundation for the rest of the article. 
 We begin with a different characterization of solutions of the 
Einstein-Maxwell equations;   cf. \cite[Proposition 5]{acg1}.

\begin{prop}
Let $(M,h)$ be an oriented  Riemannian $4$-manifold, and let $F$ be a real-valued $2$-form on 
$M$. Let $s$ and $\ro$ respectively denote the scalar curvature and trace-free Ricci curvature of $h$. 
Let $Y\subset M$ be the (possibly empty) open set where $F^+\neq 0$. Then $(h,F)$ solves the Einstein-Maxwell equations (\ref{closed}--\ref{energy}) on $Y$ 
 iff the following conditions all hold there:
\begin{eqnarray}
dF^+&=&0 \label{self}\\
s&=&\mbox{\rm const}\label{constant}\\
\ro&=& - 2 F^+\circ F^- ~.\label{matter}
\end{eqnarray}
\end{prop}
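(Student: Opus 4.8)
The plan is to treat the two directions separately, after first recording two reformulations valid on all of $M$. The first is purely algebraic: writing $F=F^++F^-$ and regarding $F^\pm$ as skew-symmetric endomorphisms of $TM$, one has $(F^+)^2=-c\,|F^+|^2\,\mathrm{Id}$ and $(F^-)^2=-c\,|F^-|^2\,\mathrm{Id}$ for a positive constant $c$, while $F^+$ and $F^-$ commute and their product $F^+F^-$ is symmetric and trace-free. Hence $[F\circ F]_0=2\,F^+\circ F^-$, so the energy equation \eqref{energy} is equivalent to the matter equation \eqref{matter}, exactly as noted in the introduction. The second reformulation is that, since $\star F=F^+-F^-$, the two Maxwell equations \eqref{closed}--\eqref{coclosed} together are equivalent to $dF^+=0$ and $dF^-=0$; and because a self-dual (resp.\ anti-self-dual) $2$-form $\phi$ satisfies $\star\phi=\phi$ (resp.\ $\star\phi=-\phi$), each of $F^\pm$ is closed iff it is coclosed, i.e.\ iff $\nabla\cdot F^\pm=0$.

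For the forward implication I would assume \eqref{closed}--\eqref{energy}. Then \eqref{self} is immediate from $dF^+=0$, and \eqref{matter} is immediate from the algebraic equivalence above. Since both $F^+$ and $F^-$ are now harmonic, they are in particular divergence-free, so the Leibniz rule \eqref{leibniz} gives $\nabla\cdot(F^+\circ F^-)=0$. Combining the contracted Bianchi identity $\nabla\cdot\ro=\frac14\,ds$ with \eqref{matter} then yields $\frac14\,ds=-2\,\nabla\cdot(F^+\circ F^-)=0$, establishing \eqref{constant}. Note that this direction in fact holds on all of $M$, with no reference to $Y$.

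The converse is where the hypothesis $F^+\neq 0$ must be used. Assuming \eqref{self}, \eqref{constant}, \eqref{matter} on $Y$, the equivalence \eqref{energy}$\Leftrightarrow$\eqref{matter} already supplies the energy equation there, and \eqref{self} together with self-duality gives $\nabla\cdot F^+=0$. It remains to produce $dF^-=0$. Using \eqref{constant} with the Bianchi identity gives $\nabla\cdot\ro=0$, so by \eqref{matter} we get $\nabla\cdot(F^+\circ F^-)=0$; feeding this into \eqref{leibniz} and discarding the term $F^-(\nabla\cdot F^+)=0$ leaves $F^+(\nabla\cdot F^-)=0$ on $Y$. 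Here is the crux: on $Y$ the self-dual form $F^+$ is nowhere zero, so by $(F^+)^2=-c\,|F^+|^2\,\mathrm{Id}$ it is invertible as an endomorphism of $TM$, and therefore $\nabla\cdot F^-=0$ on $Y$. Thus $F^-$ is coclosed, hence, being anti-self-dual, closed, and with $dF^+=0$ we recover $dF=dF^++dF^-=0$ and $d\star F=dF^+-dF^-=0$.

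The main obstacle --- and the reason the statement is localized to $Y$ --- is precisely this invertibility step: the only way to pass from $F^+(\nabla\cdot F^-)=0$ back to $\nabla\cdot F^-=0$ is to know that $F^+$ is pointwise nondegenerate, which fails exactly on the complement of $Y$. Everything else amounts to assembling standard four-dimensional Hodge-theoretic identities (the splitting $\star F=F^+-F^-$, and closed $\Leftrightarrow$ coclosed for $F^\pm$), the algebraic normal form for $F^\pm$ as endomorphisms, and the two differential identities \eqref{leibniz} and $\nabla\cdot\ro=\frac14\,ds$ already in hand.
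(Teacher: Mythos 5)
Your argument has exactly the same skeleton as the paper's proof: the algebraic facts that $(F^\pm)^2$ are multiples of the identity while $F^+\circ F^-=F^-\circ F^+$ is trace-free, giving $[F\circ F]_0=2\,F^+\circ F^-$ and hence \eqref{energy} $\Leftrightarrow$ \eqref{matter}; the contracted Bianchi identity combined with \eqref{leibniz} to get $s=\mbox{const}$ in the forward direction (valid on any open set, as you correctly observe); and, for the converse, the pointwise invertibility of $F^+$ on $Y$ to pass from $F^+(\nabla\cdot F^-)=0$ to $\nabla\cdot F^-=0$, hence to $dF=d\star F=0$. All of those steps are correct and match the paper's logic step for step.

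The gap is that you treat \eqref{leibniz} as ``already in hand.'' It is not: in the paper it is only \emph{stated} in the introduction, with its proof explicitly deferred to this very proposition, and the paper's proof accordingly \emph{begins} by establishing \eqref{leibniz} (``on which the rest of the argument hinges'') via a two-spinor computation. Nor is it an instance of the ordinary product rule, despite your calling it ``the Leibniz rule.'' Expanding naively,
\[
\nabla^a\Bigl((F^+)_a{}^{c}\,(F^-)_{cb}\Bigr)
=\Bigl(\nabla^a (F^+)_a{}^{c}\Bigr)(F^-)_{cb}
+(F^+)^{ac}\,\nabla_a (F^-)_{cb},
\]
and the second term is a full contraction of the $3$-tensor $\nabla F^-$ against $F^+$, which is not visibly $F^+(\nabla\cdot F^-)$. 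The whole content of \eqref{leibniz} is that this contraction sees only the divergence part of $\nabla F^-$; this holds because of the special structure of $\Lambda^\pm$ in dimension four --- self-dual and anti-self-dual forms correspond to symmetric $2$-spinors of opposite chiralities, so the component of $\nabla F^-$ complementary to its divergence cannot pair equivariantly with $F^+$ --- and this is exactly what the $\varepsilon$-manipulations in the paper's spinor computation establish. Without a proof of \eqref{leibniz} your write-up is incomplete at its most essential point; with such a proof supplied (by the spinor calculation or an equivalent representation-theoretic argument), it coincides with the paper's proof.
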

\begin{proof} We begin by proving  \eqref{leibniz}, on which the rest of the argument hinges. Using  lower-case latin letters for vector indices, and upper-case
primed and unprimed latin letters for the corresponding spinor indices, we have 
\begin{eqnarray*}
\nabla^a\left( {{{F^+}_a}^c}{F^-}_{cb}\right) &=&\nabla^{AA'} \left( {{F^+}_A}^C{\varepsilon_{A'}}^{C'}{F^-}_{C'B'}\varepsilon_{CB}\right) \\
&=& \nabla^{AA'} \left( {F^+}_{AB} {F^-}_{A'B'}\right) \\
&=&   F^-_{A'B'}\nabla^{AA'} F^+_{AB}+F^+_{AB} \nabla^{AA'} F^-_{A'B'}\\
&=&  F^-_{D'B'}{\varepsilon_{A'}}^{D'}\nabla^{AA'} \left( \varepsilon_{DB}{F^+_A}^D\right)+ F^+_{DB}{\varepsilon_{A}}^{D} \nabla^{AA'}\left(  \varepsilon_{D'B'} {F^-_{A'}}^{D'} \right)\\
&=&  F^-_{D'B'}\varepsilon_{DB}\nabla^{AA'} \left( {F^+_A}^D{\varepsilon_{A'}}^{D'}\right)+ F^+_{DB} \varepsilon_{D'B'}\nabla^{AA'} \left(  {F^-_{A'}}^{D'}{\varepsilon_{A}}^{D}\right)\\
&=&  F^-_{db}\nabla^{a} {F^+_a}^d+ F^+_{db} \nabla^{a}  {F^-_a}^d~,\\
\end{eqnarray*}
so that 
$$\nabla\cdot (F^+ \circ F^-) = F^- (\nabla\cdot F^+)  + F^+  (\nabla \cdot F^-),$$
which is exactly the desired identity \eqref{leibniz}. 

The endomorphisms of the tangent bundle corresponding to  self-dual or anti-self-dual forms
of length $\sqrt{2}$  are almost-complex structures;  those arising from $\Lambda^+$ moreover commute
with those arising from $\Lambda^-$, and the composition of any such pair of almost-complex structures is  trace-free. Thus $F^+\circ F^+$ and $F^-\circ F^-$ are multiples of $h$,
and $F^+\circ F^-= F^-\circ F^+$. Hence $[F\circ F]_0= 2 F^+\circ F^-$, and \eqref{energy} is therefore
algebraically equivalent to \eqref{matter}.  In particular, either  \eqref{energy} or  \eqref{matter} implies
$$
-{\textstyle\frac{1}{2}} \nabla\cdot \ro = F^- (\nabla\cdot F^+)  + F^+  (\nabla \cdot F^-)
$$
via \eqref{leibniz}, and so, by the doubly-contracted Bianchi identity, implies that 
$$
-{\textstyle\frac{1}{8}} ~ds = F^- (\nabla\cdot F^+)  + F^+  (\nabla \cdot F^-)~.
$$
In particular, (\ref{closed}--\ref{energy}) $\Longrightarrow$  (\ref{self}--\ref{matter})  on any open set. 
Conversely, (\ref{self}--\ref{matter}) imply that 
$$0 = F^+  (\nabla \cdot F^-),$$
so that, on the open set $Y$ where $F^+$ is invertible, the anti-self-dual $2$-form $F^-$ 
is co-closed, and  $F=F^++F^-$ is therefore  both closed and co-closed. Thus, on $Y$, 
  (\ref{self}--\ref{matter}) $\Longleftrightarrow$ (\ref{closed}--\ref{energy}), as claimed. 
\end{proof}

Our next result depends on the notion of a {\em holomorphy potential}. This concept, already 
 implicit in the work of  Matsushima \cite{mats}  and Lichnerowicz \cite{licmat}, was  eventually 
codified  by Calabi and others \cite{bes,calabix2}. A complex valued function $f: M\to \CC$
on a
K\"ahler manifold $(M,g,J)$ is called a {\em holomorphy potential}  if the $(1,0)$ component of its gradient is 
a holomorphic vector field:
$$\nabla_{\bar{\mu}}\nabla^{\nu}f =0.$$
By lowering an index, this is equivalent to saying that the $\odot^2 \Lambda^{0,1}$  
component of 
its Hessian  vanishes: 
$$\nabla_{\bar{\mu}}\nabla_{\bar{\nu}}f=0.$$
In the special case that $f$ is {\em real}, this is equivalent to saying that its Hessian belongs to 
$\Lambda^{1,0}\otimes \Lambda^{0,1}$, or in other words that $\Hess f$ is $J$-invariant. 
In this real case, something even more remarkable happens. First of all, $\xi= J \nabla f$
is the imaginary part of a holomorphic vector field, and so the flow of $\xi$ preserves $J$. 
At the same time, $\xi$ is the symplectic gradient of a function, so its flow also preserves the K\"ahler
 form $\omega$.
But since $g=\omega(\cdot , J \cdot )$,  this implies that the flow of $\xi$ also preserves $g$.
In other words, $\xi = J\nabla f$ is a Killing field whenever $f$ is a real holomorphy potential,
and this assumption on a real-valued function $f$ is equivalent to requiring that $\Hess f$ be a
$J$-invariant symmetric tensor.

\begin{prop} \label{tag} 
Let $(h,F)$ be a strongly Hermitian solution of the Einstein-Maxwell equations on 
a  complex surface $(M^4,J)$. Let $f=2^{-1/4}|F^+|^{1/2}$, and let $Y$ be the 
(possibly empty) open set where $f\neq 0$. Then $g=f^2h$ is a K\"ahler metric
on $Y$, and $f$ is a real holomorphy potential on $(Y,g)$, possessing   the additional property that 
$h=f^{-2}g$ has constant scalar curvature.  
\end{prop}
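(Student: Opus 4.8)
The plan is to first identify $F^+$ explicitly, then recognize $g=f^2h$ as K\"ahler, and finally extract the holomorphy-potential condition by projecting the matter equation \eqref{matter} onto the $J$-anti-invariant (i.e.\ $(2,0)\oplus(0,2)$) part of the symmetric $2$-tensors. The constant-scalar-curvature claim will then be essentially free.

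First I would use strong Hermiticity: since $h$ and $F$ are both $J$-invariant, $F$ is a real $(1,1)$-form, and because the Hodge star preserves real $(1,1)$-forms in real dimension four, both $F^+$ and $F^-$ are again of type $(1,1)$. The only self-dual $(1,1)$-forms are the pointwise multiples of the fundamental form $\omega_h=h(J\cdot,\cdot)$, so $F^+=\lambda\,\omega_h$ for some function $\lambda$; since $|\omega_h|^2_h=2$, the normalization $f=2^{-1/4}|F^+|^{1/2}$ gives precisely $f^2=|\lambda|$. On each component of $Y$ the sign of $\lambda$ is constant, so the fundamental form of $g=f^2h$ is $\omega_g=f^2\omega_h=\pm F^+$. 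The preceding Proposition guarantees that \eqref{self} holds on $Y$, i.e.\ $dF^+=0$ there, whence $\omega_g$ is closed; as $g$ is Hermitian with respect to the integrable $J$ and has closed fundamental form, it is K\"ahler on $Y$, and $F^+$ is a (locally constant) multiple of $\omega_g$.

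For the holomorphy potential I would compare the two sides of \eqref{matter} through their $J$-anti-invariant parts. On the right, both factors are of type $(1,1)$; a short pointwise computation shows that composing the almost-complex structure (to which the self-dual factor $F^+$ is proportional) with the endomorphism associated to a $(1,1)$-form produces a $J$-invariant symmetric tensor, so $F^+\circ F^-$ is $J$-invariant and contributes nothing to the projection. Hence $\ro_h$ has vanishing $J$-anti-invariant part. On the left I would substitute $h=f^{-2}g$ into the conformal transformation law for the Ricci tensor: the decisive point is that with $\phi=-\log f$ the combination $\Hess_g\phi-d\phi\otimes d\phi$ collapses to $-f^{-1}\Hess_g f$, so the $df\otimes df$ terms cancel exactly. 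Since both $g$ and $\mathrm{Ric}_g$ are $J$-invariant on the K\"ahler manifold $(Y,g)$, taking $J$-anti-invariant parts leaves only $(\ro_h)^{J\text{-anti}}=\tfrac{2}{f}(\Hess_g f)^{J\text{-anti}}$. Comparing with the previous sentence yields $(\Hess_g f)^{J\text{-anti}}=0$, i.e.\ $\Hess_g f$ is $J$-invariant, which is exactly the condition that the real positive function $f$ be a holomorphy potential on $(Y,g)$. The constant-scalar-curvature assertion is then immediate, since $h=f^{-2}g$ is literally the Einstein-Maxwell metric we began with and \eqref{constant} records that its scalar curvature is constant on $Y$.

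The main obstacle I anticipate lies entirely in the third step: I must carefully verify the linear-algebra claim that $F^+\circ F^-$ is $J$-invariant (so that the matter term drops out of the $J$-anti-invariant projection) and the precise cancellation of the first-order terms in the conformal Ricci formula, so that $(\ro_h)^{J\text{-anti}}$ is a nonzero multiple of $(\Hess_g f)^{J\text{-anti}}$ with a constant that does not vanish when $n=4$. Everything else reduces either to standard representation-theoretic facts about $(1,1)$-forms in real dimension four or to direct appeals to the Proposition already established above.
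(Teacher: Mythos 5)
Your proposal is correct and follows essentially the same route as the paper: identify $F^+$ as a pointwise multiple of $\omega_h$ so that $g=f^2h$ is K\"ahler via \eqref{self}, use \eqref{matter} to conclude that $\ro_h$ is $J$-invariant, and compare with the conformal transformation law $\ro_h=\ro_g+2f^{-1}\Hess_0 f$ to deduce that $\Hess f$ is $J$-invariant, with the constant-scalar-curvature claim coming from \eqref{constant}. The only cosmetic difference is that you re-derive the conformal Ricci formula by hand (via $\phi=-\log f$) where the paper simply cites it from Besse.
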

\begin{proof}
On a Hermitian surface, the $J$-invariant $2$-forms are given by 
$$\Lambda^{1,1}= \RR \omega_h \oplus \Lambda^-~,$$ 
so the self-dual part $F^+$ of any $J$-invariant $2$-form $F$ is a function times the 
associated $2$-form $\omega_h= h(J\cdot , \cdot )$ of our metric. If, near any point 
where $F^+$  is non-zero, we rescale our Hermitian metric
so as to give it pointwise norm $\sqrt{2}$, then,  
the self-dual $2$-form $\pm F^+$ is the associated $2$-form of
a new metric $g$ which is conformal to $h$;  and if $F^+$ is closed, the resulting metric $g$ is then 
K\"ahler. This is exactly realized by setting $g=f^2h$. 

On the other hand, if $F^+$ is a multiple of $\omega_h$, $F^+\circ F^-$ is a multiple of
$F^-(J\cdot, \cdot )$, so, given that $(h,F)$ is a strongly Hermitian solution of the Einstein-Maxwell
equations, \eqref{matter} implies that $\ro_h$  is $J$-invariant. However, 
on the set where $f\neq 0$, the function  $f$ is smooth and $g$ is defined, the trace-free Ricci tensors of the
two metrics are related \cite{bes} by 
\begin{equation}
\label{transric}
\ro_h=\ro_g + 2f^{-1} \Hess_0f
\end{equation}
where the trace-free Hessian of $f$ is computed with respect to $g$. 
Since both $g$ and $h$ have $J$-invariant Ricci tensors, it follows that 
$\nabla \nabla f$ is $J$-invariant. Hence $f$ is a real positive holomorphy potential 
on $(Y,g)$, and $h=f^{-2}g$ has constant scalar curvature by \eqref{constant}.
\end{proof}

While we have arranged to make $\pm F^+$ into the K\"ahler form $\omega_g$ of
the conformally related K\"ahler metric $g$, the reader might be right to worry
that the sign might be different on different connected components of $Y$. However, 
assuming that  $M$ is connected, this turns out to be impossible. Indeed, by a theorem 
of B\"ar \cite{baer}, the zero locus of the closed and co-closed form $F^+$ has
Hausdorff dimension $\leq 2$, and,  as we will show in  of \S \ref{scenario}, Lemma \ref{path}, this means it cannot
disconnect $M$. Since the Einstein-Maxwell equations are invariant under $F\to -F$,
we will therefore eventually be entitled to  assume that $\omega=+F^+$, at the modest price of perhaps
changing the  sign of $F$.

We conclude this section with a  partial converse of the above result:

\begin{prop}
Let  $f: M \to \RR^+$ be a positive  holomorphy potential on a K\"ahler surface $(M^4,g,J)$
with K\"ahler form  $\omega= g(J\cdot , \cdot)$.
If $h=f^{-2}g$ has constant scalar curvature, then there  is a unique $2$-form $F$ with 
$F^+=\omega$ such that $(h, F)$ is a strongly Hermitian solution of the Einstein-Maxwell equations. 
\end{prop}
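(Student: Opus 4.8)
The plan is to build $F$ by prescribing its self-dual and anti-self-dual parts separately and then invoking the characterization of Einstein-Maxwell solutions furnished by the first Proposition of this section. Since $f>0$, the metric $h=f^{-2}g$ is a genuine smooth metric conformal to $g$, and because the decomposition $\Lambda^2=\Lambda^+\oplus\Lambda^-$ is conformally invariant in dimension four, the K\"ahler form $\omega$, being self-dual for $g$, is also self-dual for $h$; hence setting $F^+:=\omega$ is consistent with the splitting $F=F^++F^-$ taken relative to $h$. Applying the first Proposition on the open set where $F^+\neq 0$ — which here is all of $M$, as $\omega$ is nowhere zero — tells us that $(h,F)$ solves (\ref{closed}--\ref{energy}) if and only if $dF^+=0$, $s_h$ is constant, and $\ro_h=-2F^+\circ F^-$. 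The first two conditions hold immediately: $d\omega=0$ because $g$ is K\"ahler, and $s_h$ is constant by hypothesis. Everything therefore reduces to producing an anti-self-dual $F^-$ satisfying the algebraic ``matter'' relation $\ro_h=-2\,\omega\circ F^-$.

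First I would record that $\ro_h$ is $J$-invariant, which is exactly what emerged in the proof of Proposition \ref{tag}: via the conformal law \eqref{transric}, $\ro_h=\ro_g+2f^{-1}\Hess_0 f$, and both $\ro_g$ (since $g$ is K\"ahler) and $\Hess_0 f$ (since $f$ is a real holomorphy potential) are $J$-invariant. Next I would use the pointwise linear-algebra fact already implicit in the first Proposition's proof: composition with $\omega$, which as an endomorphism of $TM$ is (up to sign) the complex structure $J$, carries $\Lambda^-$ to the $J$-invariant trace-free symmetric $2$-tensors, and since $\omega$ and any $F^-\in\Lambda^-$ commute and compose to a trace-free symmetric tensor, this is a bundle isomorphism between two rank-three bundles. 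Because $\ro_h$ is $J$-invariant and trace-free, the equation $\ro_h=-2\,\omega\circ F^-$ thus has a unique anti-self-dual solution $F^-$, depending smoothly on $\ro_h$. This both defines $F:=\omega+F^-$ and yields the uniqueness assertion, since the matter relation is an algebraic identity that any solution with $F^+=\omega$ must satisfy. Strong Hermitianity is then automatic: $\omega$ is $J$-invariant, $F^-\in\Lambda^-\subset\Lambda^{1,1}$ is $J$-invariant, and $h=f^{-2}g$ inherits $J$-invariance from $g$.

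The point deserving the most care — and where the constant-scalar-curvature hypothesis does its real work — is the passage from the purely algebraic relation $\ro_h=-2\,\omega\circ F^-$ back to the assertion that $F$ is genuinely \emph{harmonic}; there is no a priori reason the pointwise-defined $F^-$ should even be closed. This is precisely what the first Proposition supplies: taking the divergence of the matter relation and combining the Leibniz identity \eqref{leibniz} with the doubly-contracted Bianchi identity produces $0=\omega\,(\nabla\cdot F^-)$, using that $ds_h=0$ and that $\omega$ is co-closed. Since $\omega$ is invertible as an endomorphism, $F^-$ is forced to be co-closed, and a co-closed anti-self-dual form is automatically closed, so $F$ is both closed and co-closed. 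I would therefore simply cite the ``if'' direction of the first Proposition to conclude that $(h,F)$ solves (\ref{closed}--\ref{energy}), emphasizing that the constancy of $s_h$ enters twice: once to verify condition \eqref{constant} outright, and once, through this Bianchi argument, to guarantee that the algebraically constructed $F^-$ closes up into an honest harmonic form.
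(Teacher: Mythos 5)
Your proposal is correct and follows essentially the same route as the paper: both use the conformal relation \eqref{transric} to see that $\ro_h$ is $J$-invariant, solve the algebraic relation \eqref{matter} uniquely for $F^-\in\Lambda^-$ via the pointwise isomorphism between anti-self-dual forms and $J$-invariant trace-free symmetric tensors, and then invoke the equivalence (\ref{self}--\ref{matter}) $\Longleftrightarrow$ (\ref{closed}--\ref{energy}) from the first Proposition of \S\ref{overture} on the set where $F^+\neq 0$ (all of $M$, since $\omega$ is nondegenerate). The only differences are cosmetic: the paper records the explicit formula $F^-=\frac{1}{2}f^{-2}\varphi$ where $\ro_h=\varphi(\cdot,J\cdot)$ (the factor $f^{-2}$ arising because indices in $F^+\circ F^-$ are raised with $h$ rather than $g$), while you argue abstractly and then unpack the Bianchi/Leibniz mechanism that the cited Proposition already contains.
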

\begin{proof}
Since $f$ is a positive function with 
 $J$-invariant Hessian, \eqref{transric} guarantees that the trace-free Ricci curvature
of $h=f^{-2}g$ is $J$-invariant, and so can be uniquely written as $\ro_h=\varphi (\cdot , J\cdot )$
for a unique $\varphi\in \Lambda^-$. Setting $F^+=\omega$ and $F^-= \frac{1}{2}f^{-2}\varphi$ 
  then produces a solution of \eqref{self} and \eqref{matter}, and this choice of 
  $F^-\in \Lambda^-$   is  moreover
the only one 
that satisfies \eqref{matter} in conjunction with $F^+=\omega$. This ansatz thus solves (\ref{self}--\ref{matter})
 iff \eqref{constant} is satisfied, and therefore solves   (\ref{closed}--\ref{energy}) iff $h$ has constant scalar curvature.
\end{proof}

\section{The First Main Theorems}
\label{scenario}

The main worry aroused by the results in the previous section is that the construction 
breaks down at the zero locus of the $2$-form $F^+$. Fortunately, however, this worry turns
out to be largely misplaced. To get around this problem, we will develop a sequence of lemmata 
inspired by an argument of Derdzi\'nski \cite{derd}, now carefully implemented by 
using   a fundamental  result on zero sets of
generalized harmonic spinors due to B\"ar \cite{baer}. To do the job properly, 
we begin with a regularity result: 

\begin{prop} \label{regular} 
Let $(h,F)$ be a solution of the Einstein-Maxwell equations (\ref{closed}--\ref{energy}) 
on a smooth $4$-manifold $M$.
Suppose that,  in some coordinate atlas,  $h$ is of class $C^{2,\alpha}$ for some $\alpha > 0$,  and that $F$ is of class $C^1$. 
Then $h$ and $F$ are $C^\infty$ in harmonic coordinates. 
\end{prop}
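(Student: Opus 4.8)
The plan is to run an elliptic bootstrap in harmonic coordinates, using the Einstein--Maxwell structure to turn the curvature into lower-order data. First I would invoke the theorem of DeTurck--Kazdan: since $h$ is of class $C^{2,\alpha}$, the harmonic coordinate functions (solutions of $\Delta_h u = 0$) are $C^{3,\alpha}$, so $h$ remains $C^{2,\alpha}$ in these coordinates, and there the Ricci tensor takes its reduced form $R_{ij} = -\frac12 h^{ab}\partial_a\partial_b h_{ij} + Q_{ij}(h,\partial h)$, with $Q$ universal and quadratic in $\partial h$. Thus $h$ satisfies the quasilinear elliptic system $h^{ab}\partial_a\partial_b h_{ij} = -2R_{ij} + 2Q_{ij}(h,\partial h)$, and the whole game is to show that the right-hand side is no rougher than $h$ and $F$ themselves.

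The crucial structural input is that \eqref{matter} expresses the trace-free Ricci curvature algebraically, $\ro = -2F^+\circ F^-$, as a polynomial in $F$ and, via $\star$, in $h$ and $h^{-1}$. What is \emph{not} algebraic is the trace part $\frac{s}{4}h$, and here I would first establish that $s$ is constant; I expect this to be the main obstacle. The difficulty is that at the stated regularity $s$ is only $C^{0,\alpha}$, so the contracted Bianchi identity $\nabla\cdot\ro = \frac14\,ds$ cannot be read off classically from the curvature. Indeed, eliminating $s$ by tracing the reduced equation shows that the metric system is genuinely degenerate in the pure-trace (Yamabe) direction, so constancy of $s$ is an indispensable extra input rather than a consequence of ellipticity. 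The way around this is to compute $\nabla\cdot\ro$ from the matter side: since $F\in C^1$, the Maxwell equations \eqref{closed}--\eqref{coclosed} force $F^+$ and $F^-$ to be separately closed and co-closed, whence $F^+\circ F^-\in C^1$ and the Leibniz identity \eqref{leibniz} gives $\nabla\cdot(F^+\circ F^-)=0$ as a \emph{continuous} tensor. Equating this with the distributional form of the Bianchi identity (valid for any $C^{2,\alpha}$ metric by approximation) yields $ds=0$, hence $s\equiv\text{const}$ on the connected $M$. With this in hand, $R_{ij} = -2(F^+\circ F^-)_{ij} + \frac{s}{4}h_{ij}$ contains no derivatives of $h$, and the metric equation has right-hand side depending only on $(h,\partial h,F)$.

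For $F$ I would use that the Maxwell equations make $F$ a harmonic form, $\Delta F = 0$; the Weitzenb\"ock formula then rewrites this as a second-order elliptic equation in which $h^{ab}\partial_a\partial_b F$ equals a first-order term $A(h,\partial h)\,\partial F$ plus a zeroth-order curvature term $B(h,\partial h,\partial^2 h)\,F$. The two equations are now coupled: the metric's right-hand side depends on $F$ algebraically, while $F$'s depends on $h$ up to its curvature. I would bootstrap alternately. The one step that is not pure Schauder is the first: the term $A\,\partial F$ is merely continuous (as $\partial F\in C^0$), so I would apply $L^p$ (Calder\'on--Zygmund) estimates to get $F\in W^{2,p}$ for all $p$, hence $F\in C^{1,\beta}$. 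Thereafter all right-hand sides are H\"older and Schauder applies throughout: $F\in C^{1,\beta}$ improves the metric equation to give $h\in C^{3,\alpha}$, which improves the curvature term and hence yields $F\in C^{2,\alpha}$, and so on, each round gaining one derivative for both unknowns. Induction then gives $h,F\in C^\infty$ in harmonic coordinates.

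To reiterate, the main obstacle is the constancy of $s$ at low regularity: the energy equation \eqref{energy} controls only the trace-free Ricci tensor, the trace direction is degenerate, and pinning $s$ requires computing the divergence of $\ro$ on the matter side together with a distributional reading of the Bianchi identity. Once $s$ is known to be locally constant, the remaining coupled bootstrap is standard elliptic regularity, with the only technical wrinkle being the use of $L^p$ theory for the initial gain in the regularity of $F$.
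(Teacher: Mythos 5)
Your proposal is correct, and its skeleton is the same as the paper's: pass to harmonic coordinates via DeTurck--Kazdan, use the constancy of the scalar curvature so that \eqref{energy} turns the full Ricci tensor into an algebraic expression in $(h,F)$ with no derivatives of $h$, and then run an alternating elliptic bootstrap. The differences lie in the implementation, and they cut both ways. For the regularity of $F$, the paper avoids the Weitzenb\"ock formula and your initial $L^p$/Calder\'on--Zygmund step entirely: it regards $F$ as an element of the kernel of the \emph{first-order} elliptic operator $d+d^*$, whose coefficients involve only $h$ and no curvature, so that elliptic regularity (citing Morrey) gives $F\in C^{k,\alpha}$ whenever $h\in C^{k,\alpha}$; the bootstrap then has the clean form $h\in C^{k,\alpha}\Rightarrow F\in C^{k,\alpha}\Rightarrow r=-[F\circ F]_0+\lambda h\in C^{k,\alpha}\Rightarrow h\in C^{k+2,\alpha}$, with no curvature terms ever entering the equation for $F$ and no need for a preliminary $W^{2,p}$ gain. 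Conversely, your treatment of the constancy of $s$ is more careful than the paper's: the paper simply asserts that \eqref{energy} gives $r=-[F\circ F]_0+\lambda h$ ``for some constant $\lambda$,'' implicitly importing the conclusion $s=\mathrm{const}$ from \S\ref{overture}, where it was derived under an implicit smoothness assumption that the paper only declares justified \emph{after} Proposition \ref{regular}. Your argument --- $F^\pm$ are separately closed and co-closed by \eqref{closed}--\eqref{coclosed}, hence $\nabla\cdot(F^+\circ F^-)=0$ as a continuous tensor by \eqref{leibniz}, and this combined with the distributional contracted Bianchi identity (valid for $C^{2,\alpha}$ metrics by approximation) forces $ds=0$ --- closes this small circle at exactly the stated regularity, which is a genuine gain in rigor at that point; note only that connectedness of $M$ is not assumed, so one should say $s$ is locally constant, which is all the bootstrap requires. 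In short: same strategy, with the paper's route to the regularity of $F$ being leaner, and your route to the constancy of $s$ being more self-contained.
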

\begin{proof} By the results of DeTurck and Kazdan \cite{detkaz}, we can pass to harmonic coordinates
without losing any regularity, and if,  in harmonic coordinates, $h$ is  of class $C^{k,\alpha}$, 
with  Ricci tensor $r$ of  class $C^{k,\alpha}$, then $h$ is actually of class $C^{k+2,\alpha}$. On the other hand, since $F$ is in the kernel of $d+d^*$, elliptic regularity \cite{morrey} implies that if $h$ is of class $C^{k,\alpha}$, 
then $F$ is of class $C^{k,\alpha}$, too. However, equation \eqref{energy} tells us that 
$r=-[F\circ F]_0+ \lambda h$ for some constant $\lambda$. Thus, if $h$ is of class $C^{k,\alpha}$,
$r$ is also of class $C^{k,\alpha}$, so $h$ is actually $C^{k+2,\alpha}$. It therefore follows by induction (``bootstrapping'') that
$h$ and $F$ are both smooth in harmonic coordinates.   \end{proof}

In particular, the implicit assumption of smoothness used throughout \S \ref{overture} can now be seen to have been perfectly justified. 

Next, we will  establish a technical lemma of key importance. 

\begin{lem} \label{extend} 
Let $(X,h)$ be a  $C^3$  Riemannian $n$-manifold, and let $Z\subset X$ be a closed subset of 
 Hausdorff dimension $< (n-1)$. Let $Y=X-Z$ be the complement of $Z$, and suppose that $\xi$ is a Killing field on $(Y,h)$. Then  $\xi$  extends to $(X,h)$ as a Killing vector field $\hat{\xi}$.
 \end{lem}
 \begin{proof}
 Let $q\in Z$ be any point, and let $U\subset M$ be a geodesically convex neighborhood of 
 $q$. Since $Z$ has $n$-dimensional measure zero, it has empty interior, and it follows that $U-Z$
 is non-empty. Thus, there exists some $p\in U$ which belongs to the complement $Y$ of $Z$;
 and since $Z$ is  closed, some small metric ball $B_{2\varepsilon} (p)$ is also contained in $U-Z$. 
 Let $S\approx S^{n-1}$ be the unit sphere in $T_pM$, and let $\Pi : (U-\{ p\}) \to S$ be the 
 $C^2$  map which sends  $x\in (U-\{ p\})$ to the initial unit tangent vector  of  the geodesic segment 
 $\overline{px}$. Since $\Pi$ is Lipschitz, the Hausdorff dimension of $\Pi ( Z\cap U)$ is also 
 $< (n-1)$, so almost every geodesic through $p$ misses $Z$. Now recall that the restriction of
 a Killing field $\xi$ to any geodesic $\gamma$ solves  Jacobi's equation, since the flow of $\xi$
 sends $\gamma$ to a family of geodesics. Let  us therefore define a $C^1$ vector field 
 $\hat{\xi}$ on $U-\{p\}$, as the unique family of solutions of Jacobi's equation
 along  geodesics radiating from $p$, with the same  initial values and initial (radial) derivatives as $\xi$ 
 along the sphere $S_\varepsilon ( p) =
 \partial B_\varepsilon (p)$. 
 Then $\xi$ and $\hat{\xi}$ are both $C^1$ on $U-Z-\{p\}$, and agree on 
 $W$, where $W\subset (U-Z-\{ p\})$ is 
 the union  of all the geodesics
 radiating from $p$ which miss $X$. However, $W$ is 
 dense in $U-Z-\{p\}$, since its complement $\Pi^{-1}[\Pi (Z\cap U)]$ is of $n$-dimensional measure zero. 
  Because  two continuous vector fields 
 on $U-Z-\{p\}$ which agree on a dense set must be equal, it follows that $\hat{\xi}=\xi$ on $U-Z-\{ p\}$.
 We can therefore  extend $\xi$ across $Z$ as $\hat{\xi}$, and  extend $\hat{\xi}$ 
 across $p$ as $\xi$. Moreover, since the $C^1$ vector field $\hat{\xi}$ solves Killing's equation on the 
open  dense set $U-Z-\{ p\}$ where it coincides with $\xi$, it actually solves Killing's equation everywhere. 
Finally, since  the intersection of two geodesically convex sets is geodesically convex, any two such local extension of $\xi$ across $Z$ agree on the overlap, and we can therefore consistently extend  $\xi$ 
to $M$ as a Killing field $\hat{\xi}$. 
 \end{proof}
 
 The above argument also establishes a minor noteworthy point: 
 
 \begin{lem}\label{path}
 Let $X$  be a smooth connected  $n$-manifold, and let $Z\subset X$ be a
 closed subset of Hausdorff dimension $< (n-1)$. Then $X-Z$ is path connected. Moreover, 
 for any  metric $h$ on $X$, the 
 Riemannian distance in  $(X-Z,h)$ is just the restriction of the Riemannian distance from 
 $(X,h)$.
 \end{lem}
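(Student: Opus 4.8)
The plan is to re-use the geodesic machinery from the proof of Lemma~\ref{extend}, but now to exploit it from \emph{two} base points at once. First I would dispose of the trivial case $n=1$, where the hypothesis forces $Z=\emptyset$, and thereafter assume $n\geq 2$, so that the relevant direction sphere $S^{n-1}$ has positive dimension. Since $Z$ has Hausdorff dimension $<n-1<n$, it has vanishing $n$-dimensional measure and hence empty interior, so $X-Z$ is dense in $X$; this density is what lets me move troublesome points off $Z$ later on.

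The heart of the matter is a purely local statement: if $U\subset X$ is a geodesically convex ball, then $U-Z$ is path connected, and any two of its points can be joined inside $U-Z$ by a path whose length exceeds their distance by an arbitrarily small amount. To prove this, I would fix $x,y\in U-Z$ and, exactly as in Lemma~\ref{extend}, observe that the radial projection $\Pi$ from either base point is Lipschitz, so $\Pi(Z\cap U)$ has Hausdorff dimension $<n-1$ in the corresponding direction sphere and is therefore of measure zero. Consequently the cone $W_x$ (respectively $W_y$) swept out by the radial geodesics emanating from $x$ (respectively $y$) that miss $Z$ has \emph{full} $n$-dimensional measure in $U$. Two full-measure subsets of $U$ must intersect, so I may choose a point $m\in W_x\cap W_y$; the radial segments $\overline{xm}$ and $\overline{my}$ then both avoid $Z$ and their concatenation is a broken-geodesic path from $x$ to $y$ lying in $U-Z$. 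For the distance refinement I would take $m$ within $\delta$ of the midpoint of the minimizing geodesic $\overline{xy}$, which is possible because $W_x\cap W_y$ is dense, so that the triangle inequality bounds the detour length by $d(x,y)+2\delta$.

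The step I expect to be the main obstacle — and the reason the two-center device is essential — is that one \emph{cannot} argue by simply removing the bad directions from a single direction sphere. A set of Hausdorff dimension $<n-1$ can still disconnect $S^{n-1}$ (a circle disconnects $S^2$), so the good directions seen from one point need not form a connected set, and a single radial foliation is not enough. Intersecting the two full-measure cones inside the $n$-dimensional ball is precisely what sidesteps this failure.

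Finally I would globalize. Given $x,y\in X-Z$, connectedness of $X$ supplies a path in $X$; covering its image by finitely many convex balls and subdividing, I obtain a chain of division points, which I nudge slightly into $X-Z$ at the overlaps using density. Applying the local statement on each ball and concatenating produces a path in $X-Z$, establishing path connectedness. For the distance claim, the inequality $\dist_{X-Z}\geq\dist_X$ is automatic, and the reverse follows by taking the initial path in $X$ to be nearly length-minimizing, subdividing it finely, perturbing the division points into $X-Z$ with negligible change of length, and rerouting each short arc within its convex ball at the cost of an arbitrarily small total excess. Letting the excess tend to zero yields $\dist_{X-Z}=\dist_X$ on $X-Z$, as claimed.
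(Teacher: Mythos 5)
Your proposal is correct and follows essentially the same route as the paper's own proof: both use the Lipschitz radial-projection argument from Lemma~\ref{extend} to show that, inside a geodesically convex ball, the cones of geodesics avoiding $Z$ emanating from two given points of $U-Z$ each have full measure and hence intersect densely, producing broken-geodesic detours of nearly minimal length, which are then concatenated over a chain of convex balls (the paper takes the intermediate point $q_j\to q$ rather than near the midpoint, an immaterial difference). The one ingredient you omit is the paper's opening reduction: since the lemma allows an arbitrary metric $h$, one first reduces to the case of a smooth metric by quasi-isometric pinching, without which the geodesic machinery you invoke (convex balls, unique minimizing geodesics) need not be available.
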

 \begin{proof} It suffices to prove to prove the statement when $h$ is smooth, as the general case  then follows by 
 quasi-isometric pinching. 
  In any small geodesically convex ball $U$, a dense set
 of  points of $U-Z$  can be reached from any  $p\in U-Z$ by following 
 distance-minimizing geodesics  that avoid $Z$. We can 
 therefore reach any $q\in U-Z$ from $p\in U-Z$ by following  broken geodesic paths $\overline{pq_j}\cup \overline{q_jq}$ in $U-Z$, where $q_j\to q$, and, as $j\to \infty$, 
  the lengths of such  paths approach the Riemannian distance from $p$ to $q$ in $X$. Any piecewise geodesic path 
 in $X$ joining two points in $X-Z$ can therefore be approximated by piecewise geodesic paths in $X-Z$ 
 of
 essentially the same length.
 \end{proof}
 
 \begin{lem} \label{tip} 
 Let $(M^n,J, h)$, $n=2m\geq 4$,  be a  Hermitian manifold with \linebreak 
 $J$-invariant Ricci tensor,
and suppose that 
 $f$ is a continuous non-negative function such that $g=f^2h$ is a  K\"ahler metric 
 on the open subset $Y$ where $f$ is non-zero.   Relative to the  complex coordinate atlas, assume that $f$ is at least $C^2$ on $Y$, 
 and that $h$ is at least $C^3$ in some $C^2$-compatible coordinate atlas for $M$.  
 Suppose, moreover, that 
 $Z:=f^{-1}(0) = M-Y$  has Hausdorff dimension $< (n-1)$. Then $Z=\varnothing$, 
 $f$ is everywhere positive, and $(M,J,h)$ is globally conformally K\"ahler. 
 \end{lem}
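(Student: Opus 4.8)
The plan is to manufacture a Killing field on $Y$ whose extension across $Z$ is forced to contradict the vanishing of $f$, and then to invoke Lemmata \ref{extend} and \ref{path} to close the argument. First I would reprise, in general dimension, the reasoning behind Proposition \ref{tag}. On $Y$ the K\"ahler metric $g=f^2h$ has $J$-invariant Ricci tensor, while $h$ has $J$-invariant Ricci tensor by hypothesis; the standard conformal transformation law gives $\ro_h=\ro_g+(n-2)f^{-1}\Hess_0 f$, with the cross term $df\otimes df$ cancelling precisely because the conformal factor is a power of $f$ (for $\phi=-\log f$ one has $\Hess\phi-d\phi\otimes d\phi=-f^{-1}\Hess_g f$). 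Hence $\Hess_g f$ is $J$-invariant, so $f$ is a real holomorphy potential on $(Y,g)$ and $\xi=J\grad_g f$ is a holomorphic vector field whose flow preserves $J$ and $\omega_g$. Since $g$ is Hermitian, $\xi(f)=g(\grad_g f, J\grad_g f)=0$, so this flow also fixes $f$, hence preserves $g$ and therefore $h=f^{-2}g$; thus $\xi$ is in fact a Killing field for $h$ on $Y$.

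Next I would apply Lemma \ref{extend}. Because $Z$ has Hausdorff dimension $<n-1$ and $\xi$ is $h$-Killing on $Y=M-Z$, it extends to a Killing field $\hat{\xi}$ on all of $(M,h)$, which is at least $C^1$ and hence locally bounded across $Z$. The point is to read this boundedness back through the formula $\xi=f^{-2}J\grad_h f$, valid on $Y$ since $\grad_g f=f^{-2}\grad_h f$. Taking $h$-norms (and using that $J$ is an $h$-isometry) yields $|\grad_h f|_h=f^{2}|\hat{\xi}|_h$, so on a small ball $B$ about any $q\in Z$ we obtain a bound $|\grad_h f|_h\le Cf^{2}$.

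The concluding step turns this into a contradiction. Setting $u=1/f$ on $B\cap Y$, the bound becomes $|\grad_h u|_h\le C$, so $u$ is $C$-Lipschitz with respect to the Riemannian distance of $(B\cap Y,h)$; by Lemma \ref{path} this distance is the restriction of the ambient distance, so $u$ is bounded on $B\cap Y$ by $u(p_0)+C\,\mathrm{diam}(B)$ for any fixed basepoint $p_0$. But $Y$ is dense in $M$, and as $p\to q$ within $Y$ we have $f(p)\to f(q)=0$ by continuity, whence $u(p)\to+\infty$, contradicting the bound. Therefore $Z=\varnothing$, $f$ is everywhere positive, $g=f^2h$ is globally K\"ahler, and $(M,J,h)$ is globally conformally K\"ahler.

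I expect the main obstacle to be the verification, in the first step, that $\xi$ is Killing for $h$ (and not merely for $g$): this rests on the holomorphy-potential chain of implications together with the cancellation of the $df\otimes df$ term in the conformal Ricci formula, which is exactly what makes $\Hess_g f$, rather than some less tractable combination, the quantity controlled by the two $J$-invariant Ricci tensors. Once $\xi$ is known to be $h$-Killing, the extension and the $1/f$-Lipschitz argument are comparatively routine, modulo the careful use of Lemma \ref{path} to transfer the Lipschitz estimate from $Y$ to the ambient distance.
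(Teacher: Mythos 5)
Your proposal is correct, and its skeleton coincides with the paper's: the conformal transformation law \eqref{transric} (with the $df\otimes df$ cancellation you note) makes $f$ a real holomorphy potential on $(Y,g)$, the relation $\xi f=0$ upgrades $\xi=J\grad_g f$ from a $g$-Killing field to an $h$-Killing field, and Lemma \ref{extend} then extends it across $Z$. Where you diverge is the endgame. The paper lowers an index on $\hat{\xi}$ to obtain a $C^1$ $1$-form $\hat{\phi}=h(J\hat{\xi},\cdot)$ that agrees with $d(f^{-1})$ on $Y$, observes that it is closed, and applies the Poincar\'e lemma in a geodesically convex neighborhood together with the \emph{path-connectedness} half of Lemma \ref{path} to conclude that $f^{-1}$ differs from a $C^2$ function by a constant, hence is bounded near $q\in Z$. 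You instead take norms in the identity $\xi=f^{-2}J\grad_h f$ to get the pointwise bound $|\grad_h(1/f)|_h=|\hat{\xi}|_h\leq C$ near $q$, and then invoke the \emph{distance-comparison} half of Lemma \ref{path} to convert this into a Lipschitz bound for $1/f$ relative to the ambient distance. Both variants reach the same contradiction with $f(q)=0$; yours is marginally more economical, since it needs only the local boundedness of $\hat{\xi}$ rather than its full $C^1$ regularity and the Poincar\'e lemma, while the paper's version has the mild advantage of not requiring any quantitative diameter bound on the neighborhood. It is worth noting that the two arguments use complementary halves of Lemma \ref{path}, which is presumably why the paper states both conclusions in that lemma.
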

  \begin{proof}
  The trace-free Ricci tensors of the conformally related metrics $h$ and $g$ are related \cite{bes} by 
  $$\ro_h=\ro_g + (n-2) f^{-1}\Hess_0 f$$
  where the trace-free Hessian of $f$ is computed with respect to $g$. Since
  the Ricci tensors of both metrics are $J$-invariant, it follows that the component of
  $\Hess f$ in $\odot^2 \Lambda^{0,1}$ vanishes, so that 
  $f$ is a real holomorphy potential on $(Y,g)$, and its 
   symplectic gradient $\xi= J \grad_g f$ is therefore a Killing field.
  However, $\xi f=0$, so the flow of $\xi$ preserves not only $g$, but also $h=f^{-2}g$; that is, 
   $\xi$ is a Killing field for $h$, defined on the complement of a closed 
  set $Z$ of Hausdorff dimension $< (n-1)$. 
    Thus $\xi$ extends across $Z$ as a Killing field by Lemma \ref{extend}, and 
  the vector field $-J\xi = \grad_g f$ therefore extends across $Z$, too. However, 
  $\grad_g f=f^{-2}\grad_h f= -\grad_h f^{-1}$, so, by lowering an index, 
   it follows that the  $1$-form $\phi=h(J\xi, \cdot )= d(f^{-1})$
  extends across $Z$ as a  $1$-form $\hat{\phi}=h(J\hat{\xi}, \cdot )$ which is at least $C^1$.  
  On the other hand, 
   the $1$-form $\phi$ satisfies $d\phi=0$ on open the dense set $Y\subset M$,
  so $d\hat{\phi}=0$. If $U$ is a geodesically 
  convex neighborhood of some $q\in Z \subset M$, the Poincar\'e lemma
  therefore tells us that  $\hat{\phi}= du$ for
  some $C^2$ function $u$ on $U$, since $U$ is contractible. On the other hand, 
  Lemma \ref{path} guarantees that  $U-Z$ is  path connected. However, $d(u-f^{-1})=\hat{\phi}-\phi=0$ on $U -Z$, and since $U-Z$ is
  connected, it follows that $u - f^{-1}$ is constant. Thus $f^{-1}$ 
  extends across $q\in Z$ as $u+\mbox{const}$. However, since $f(q)=0$, this is a contradiction.
  We are therefore forced to conclude that $Z = \varnothing$, and that $f> 0$ on all of $M$. 
  In particular, $h=f^{-2}g$ is globally conformally K\"ahler. 
\end{proof}

We are now ready to prove our first main results. 

\begin{thm} Let $(h,F)$ be a strongly Hermitian solution of the Einstein-Maxwell equations on 
a complex surface $(M^4,J)$, and assume   that $h$ is {\em not} Einstein. Also, for some $\alpha > 0$,  suppose that 
$h$ is of differentiability class  
$C^{2,\alpha}$ 
  and  that $F$ is at least  $C^1$, relative to the complex atlas of $(M,J)$.
Then there is a $C^\infty$  K\"ahler metric $g$ on $(M,J)$ and a $C^\infty$  positive holomorphy potential
$f$ on $(M,J,g)$ such that $h=f^{-2}g$, and such that  $F^+$ is a constant times the K\"ahler form $\omega$ of $g$.
\end{thm}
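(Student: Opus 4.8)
The plan is to funnel everything into a single application of Lemma \ref{tip}, whose one nontrivial hypothesis---a Hausdorff-dimension bound on the vanishing locus of $F^+$---is the real content to be supplied. First I would apply Proposition \ref{regular} to promote the assumed $C^{2,\alpha}$ and $C^1$ regularity of $h$ and $F$ to genuine smoothness, so that the constructions of \S\ref{overture} apply unconditionally. Following Proposition \ref{tag}, set $f=2^{-1/4}|F^+|^{1/2}$ and let $Y=\{f\neq 0\}$; then $g=f^2h$ is K\"ahler on $Y$, $f$ is a real holomorphy potential there, and $h=f^{-2}g$ has constant scalar curvature. Since \eqref{matter} gives $\ro_h=-2F^+\circ F^-$ everywhere and $F^+$---being self-dual and $J$-invariant---is pointwise a multiple of the fundamental form $\omega_h$, the Ricci tensor $\ro_h$ is $J$-invariant on all of $M$; and because $h$ is assumed not Einstein, $F^+\not\equiv 0$, so $Y\neq\varnothing$.

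The crux is to verify that $Z:=f^{-1}(0)=\{F^+=0\}$ has Hausdorff dimension $<n-1=3$. For this I would first observe that $F$ is harmonic, being closed and co-closed by (\ref{closed}--\ref{coclosed}); splitting these two equations into self-dual and anti-self-dual parts shows that $F^+$ is itself closed and co-closed. Under the bundle isomorphism $\Lambda^+\cong\odot^2\mathbb{S}_+$, a self-dual harmonic $2$-form corresponds to a solution of a first-order Dirac-type equation, so B\"ar's theorem \cite{baer} on zero sets of generalized harmonic spinors applies and yields $\dim_H Z\leq n-2=2$. This is the step I expect to be the main obstacle: the subsidiary Lemmas \ref{extend} and \ref{path}, together with Lemma \ref{tip}, are all engineered to digest exactly such a dimension bound, but producing the bound itself requires the spinorial reinterpretation of $F^+$ and B\"ar's nontrivial zero-set estimate.

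With this bound in hand, the hypotheses of Lemma \ref{tip} are all in force---$(M^4,J,h)$ is Hermitian with $J$-invariant Ricci tensor, $f$ is continuous, nonnegative, and $C^2$ on $Y$ (where $F^+$ is smooth and nonvanishing), and $g=f^2h$ is K\"ahler on $Y$---so the lemma forces $Z=\varnothing$ and $f>0$ throughout $M$. Since $F^+$ is then smooth and nowhere zero, $f=2^{-1/4}|F^+|^{1/2}$ is in fact smooth on all of $M$, whence $g=f^2h$ is a globally defined smooth K\"ahler metric on $(M,J)$, $f$ is a global positive real holomorphy potential, and $h=f^{-2}g$ has constant scalar curvature. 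It remains only to identify $F^+$ with a constant multiple of the K\"ahler form $\omega$ of $g$: a direct computation gives $|F^+|_g=f^{-2}|F^+|_h=\sqrt 2=|\omega|_g$, and since both $F^+$ and $\omega$ are self-dual and $J$-invariant they are pointwise proportional, the proportionality factor being a continuous function of constant modulus $1$ on the now-connected $M=Y$, hence equal to the constant $\pm 1$. Replacing $F$ by $-F$ if necessary---an operation preserving (\ref{closed}--\ref{energy})---one may arrange $F^+=\omega$, completing the argument.
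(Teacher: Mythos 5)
Your proposal follows essentially the same route as the paper: Proposition \ref{regular} for interior smoothness, the observation that $F^+$ lies in the kernel of the Dirac-type operator $d+d^*$ so that B\"ar's theorem \cite{baer} bounds the Hausdorff dimension of its zero set, then Lemma \ref{tip} to force $Z=\varnothing$, and finally the pointwise-norm computation plus connectedness to pin down $F^+=\pm\omega$. Your handling of the $J$-invariance of $\ro_h$ on all of $M$ via \eqref{matter}, and of the sign ambiguity, matches the paper's reasoning, and the B\"ar step you single out as the crux is indeed exactly the paper's key input.

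There is, however, one genuine omission at the end. The theorem asserts that $g$ and $f$ are $C^\infty$ \emph{relative to the complex atlas of} $(M,J)$, whereas everything you extract from Proposition \ref{regular} is smoothness \emph{in harmonic coordinates for $h$}; the hypotheses only give $C^{2,\alpha}$ and $C^1$ regularity in complex coordinates, so these are a priori different smooth structures, and your sentence ``$F^+$ is then smooth and nowhere zero, \dots\ whence $g=f^2h$ is a globally defined smooth K\"ahler metric on $(M,J)$'' silently conflates them. The paper devotes its entire closing paragraph to exactly this point: since $g=f^2h$ is smooth in the harmonic atlas, elliptic regularity makes every $g$-harmonic function smooth in harmonic coordinates; the real and imaginary parts of local holomorphic functions are $g$-harmonic because $g$ is K\"ahler; hence the holomorphic coordinates themselves are smooth functions of the harmonic coordinates, so the complex atlas and the harmonic atlas define the same $C^\infty$ structure, and only then does smoothness of $g$ and $f$ in the sense demanded by the statement follow. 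You should add this bridge; without it the conclusion as stated is not established, even though every other step of your argument is sound.
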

\begin{proof}
First notice that $h$ is smooth in harmonic coordinates by Proposition \ref{regular},
and we can therefore invoke the work of B\"ar \cite{baer}.  The self-dual $2$-form 
$F^+$ is in the kernel of the Dirac-type operator $d+d^*$ on $(M,h)$, and cannot be 
identically zero, because otherwise \eqref{matter} would force $h$ to be Einstein, contrary
to our assumptions. Thus B\"ar's theorem \cite{baer} asserts that the zero locus $Z$
of $F^+$ has Hausdorff codimension $\geq 2$. It follows that $f=2^{-1/4}|F^+|^{1/2}$ 
and $g=f^2h$ 
 fulfill the hypotheses of Lemma \ref{tip}, since 
 elliptic regularity guarantees that 
$F$ is at least $C^{2,\alpha}$ in complex coordinates. Hence $Z=f^{-1}(0)$ is empty, and  $(M,h)$ is globally conformally 
K\"ahler, with $h=f^{-2}g$ for a K\"ahler metric $g$ and a positive holomorphy potential $f$. 

It only remains to show that $g$ and $f$ are actually smooth with respect to  the complex coordinate
atlas of $(M,J)$. To this end, first notice that $h$ and $f= 2^{-1/4}|F^+|^{1/2}\neq 0$ are 
smooth in the harmonic coordinate atlas for $h$ by Proposition \ref{regular}. 
It follows that $g=f^2h$ is smooth in these coordinates, too. Elliptic regularity thus implies that 
any function that is harmonic with respect to $g$ is also  smooth with respect to 
harmonic coordinates for $h$. However, the real 
and imaginary parts of any local holomorphic function are harmonic with respect to 
any K\"ahler metric. Since $g$ is K\"ahler, to follows that the complex coordinate atlas
of $(M,J)$ defines the same $C^\infty$ structure as the harmonic atlas of $h$. 
\end{proof}

To prove  Theorems \ref{clef} and \ref{alto}, it thus suffices to address the case in which  
$h$ is Einstein. However, this case follows \cite{lebhem}
from  the work of Derdzi\'nski \cite{derd}, together with 
  the Riemannian Goldberg-Sachs theorem \cite{aggs,nur} and results of Boyer \cite{boy2}
 on  compact anti-self-dual Hermitian manifolds. Here one of Derdzi\'nski's important discoveries is
 that  $W_+$ is either nowhere zero or must vanish identically. We note in passing that Lemma
 \ref{tip},  with $f=(24|W_+|^2)^{1/3}$, also  provides a clear and watertight way of establishing this point. 
Indeed, the equation $\nabla\cdot W_+=0$ is also of generalized Dirac type, so 
B\"ar's theorem once again implies that the zero locus of $W_+$ has Hausdorff codimension $\geq 2$
on any non-anti-self-dual Einstein $4$-manifold.

\section{Some Compact Examples}
\label{construct}

Let us now construct some compact, non-K\"ahler examples of strongly Hermitian solutions of the 
Einstein-Maxwell equations. To do this, we will 
look for K\"ahler metrics $g$ on a product $\CP_1 \times \Sigma$, together with  
positive holomorphy potentials $f$ such that $h=f^{-2}g$ has constant scalar curvature.
For simplicity, we will take $g$ to be the Riemannian product of an axisymmetric metric
on $S^2=\CP_1$ with a metric of constant scalar curvature $s_2=\mathbf{c}$ on $\Sigma$, and we will
take our holomorphy potential $f$ to be the Hamiltonian for  rotation of $S^2$ about the given
axis, with period $2\pi$. Thus, 
$$
g=g_1 + g_2
$$
where $(\Sigma, g_2)$ has constant scalar curvature $\mathbf{c}\in \RR$, and where
the metric $g_1$ on $S^2$ can be written in cylindrical coordinates $(t, \theta) \in (a,b) \times (0,2\pi ]$
as 
$$g_1 = \frac{dt^2}{\Psi(t)} + \Psi(t) d\theta^2$$
for some smooth positive function $\Psi(t)$. Here we have put the K\"ahler form 
$$\omega_1 = dt\wedge d\theta$$
in Darboux coordinates,  so that we may assume that our holomorphy potential 
is given by $f=t$ as long as we remember to insist  that $b> a> 0$. The scalar curvature of $g$ is then given by
$$s_1 = \Delta_{g_1}\log \Psi = - \Psi^{\prime\prime}(t).$$
Thus the scalar curvature of $g$ is given by 
$$s=s_1+s_2 = \mathbf{c} - \Psi^{\prime\prime}(t).$$
We now want to arrange that $h=f^{-2}g$ has constant scalar curvature, which is to say that 
\begin{equation}
\label{yada}
(6\Delta_g + s)f^{-1} =  f^{-3}\mathbf{d}
\end{equation}
for a constant $\mathbf{d} =s_h$. We may now rewrite this as
$$s =  f^{-2}\mathbf{d} - 6f\Delta f^{-1}$$
or in other words, as 
$$\mathbf{c} - \Psi^{\prime\prime}= \frac{\mathbf{d}}{t^2} - 6t \Delta_{g_1} \left(\frac{1}{t}\right)$$
since the Hessian of $f=t$ is trivial in the $\Sigma$-directions of our Riemannian product. 
Since 
$$\Delta_{g_1} \left(\frac{1}{t}\right) = \left( \frac{\Psi}{t^2}\right)^\prime ,$$
the Yamabe equation \eqref{yada} therefore reduces to the ODE
$$
\mathbf{c} - \Psi^{\prime\prime}= \frac{\mathbf{d}}{t^2} - 6 \frac{\Psi^\prime}{t} + 12 \frac{\Psi}{t^2} , 
$$
or equivalently 
\begin{equation}
\label{keats}
t^2 \Psi^{\prime\prime} - 6t \Psi^{\prime} + 12 \Psi =  \mathbf{c} t^2 -\mathbf{d}.
\end{equation}
Since the linear operator 
$$y\longmapsto  t^2y^{\prime\prime} - 6 ty^\prime + 12 y$$
acts on monomials by 
$$t^n \longmapsto (n-3)(n-4) t^n$$
the general solution of equation \eqref{keats}  is therefore a quartic polynomial 
$$\Psi(t) = At^4 + Bt^3 + \frac{\mathbf{c}}{2}t^2 - \frac{\mathbf{d}}{12}$$
with $\Psi^\prime(0)=0$. Now, in order to get a metric on $S^2$, we need to impose the 
boundary conditions that 
$$\Psi(a) = \Psi(b) = 0, \quad \Psi^{\prime} (a) = - \Psi^\prime (b) = 2,$$
while remembering that we also must have $\Psi^{\prime} (0)=0$ and $\Psi(t)> 0$ on $(a,b)$; for example,
if we set $t-a=\mathbf{r}^2/2$, these conditions guarantee that the metric takes the form 
$(1+O(\mathbf{r}^2))d\mathbf{r}^2+ (1+ O(\mathbf{r}^2))\mathbf{r}^2 d\theta^2$ for 
small positive $t-a$. 
For $0< a < b$, the unique such quartic polynomial is given by 
\begin{center}
\includegraphics[scale=.2]{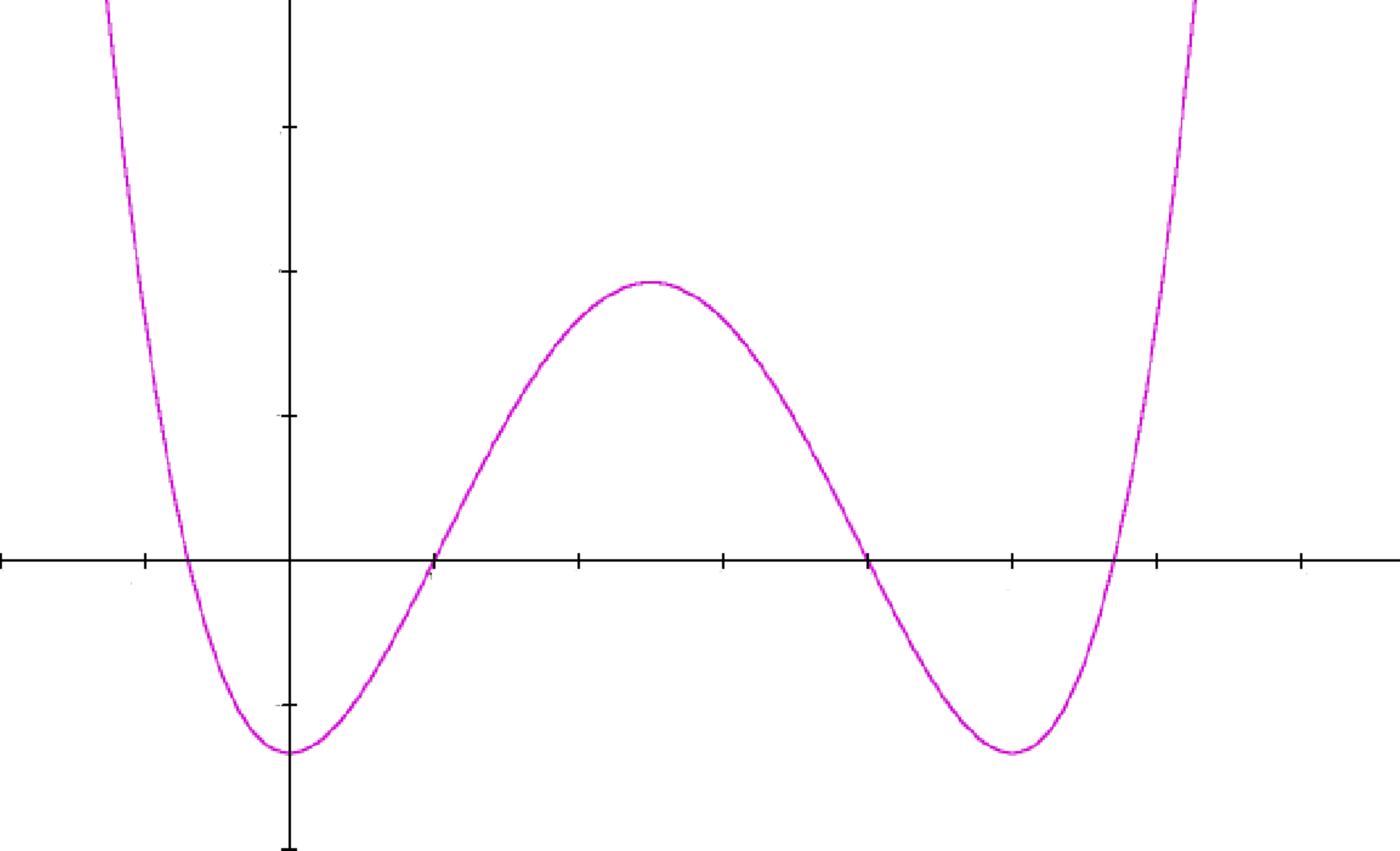}
\end{center}
\begin{equation}
\label{solution}
\Psi (t)= \frac{(t-a)(t-b)}{a-b}\left[ 2 - \frac{(t-a)(t-b)}{ab}\right] ~. 
\end{equation}
One can then read off that 
$$\mathbf{d}= -12 ~\Psi(0)= \frac{12ab}{b-a} > 0$$
and that 
$$
\mathbf{c} = \Psi^{\prime\prime}(0)= \frac{2(a+b)^2}{(b-a)ab}> 0.
$$
In particular, the constant scalar curvature $s_2=\mathbf{c}$ of $\Sigma$ must be positive, 
so $\Sigma$ must be a $2$-sphere $\CP_1$.  Gauss-Bonnet moreover guarantees that the total area 
of $\Sigma$ must be 
$$\omega (\Sigma) = \frac{4\pi (b-a) ab}{(a+b)^2}.$$
By contrast, the total area of $(\CP_1, g_1)$ is $4\pi (b-a)$. Thus, we have constructed
a K\"ahler metric 
$$g=g_1+g_2$$
on $\CP_1\times \CP_1$
which is not locally symmetric, but engenders a solution 
\begin{eqnarray*}
h &=& \frac{1}{t^2}g\\
F^+&=&\omega
\end{eqnarray*}
of positive scalar curvature $\mathbf{d}=s_h$, 
and belongs to the K\"ahler class
$$[\omega ] = 4\pi (b-a) \left( \mathfrak{S}_2+
\frac{b/a}{(1+b/a)^2}\mathfrak{S}_1 \right)\in H^2(\CP_1\times \CP_1),$$
where $\mathfrak{S}_1$ and $\mathfrak{S}_2$ are the Poincar\'e duals of the first and second factors
of $\CP_1\times \CP_1$. As we allow $b> a> 0$ to vary, these sweep out all
the K\"ahler classes for which the second factor has  less than one-quarter the area  of the first factor. 
In particular, the constant-scalar-curvature K\"ahler metric in such a class $[\omega]$, obtained by 
taking the Riemannian product of round $2$-spheres of appropriate radii, is not the only 
K\"ahler metric in $[\omega]$ that engenders a solution of the Einstein-Maxwell equations. 
The solutions $(h,F)$ so engendered are moreover geometrically distinct; one family  of solutions
consists of  symmetric spaces, 
whereas the metrics in the other family are not even locally symmetric.  Theorem \ref{behold}
now follows. 

\section{Concluding Remarks}
\label{conclusion}

We have just seen that not every strongly Hermitian Einstein-Maxwell solution on a compact
complex surface is given by a cscK metric. This presents us with the intriguing problem of
determining when such non-K\"ahler solutions exist. One hint is provided by the following easy result:

\begin{prop}\label{rule} 
Let  $(h,F)$ be a strongly Hermitian solution of the Einstein-Maxwell equations 
(\ref{closed}--\ref{energy}) on a compact complex surface $(M^4,J)$. Then 
either $h$ is a constant-scalar-curvature K\"ahler metric, or else
$(M,J)$ is a rational or ruled surface. 
\end{prop}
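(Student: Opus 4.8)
The plan is to read off the structure of $h$ directly from Theorem~\ref{alto} and then let the classification of compact complex surfaces do the remaining work. By Theorem~\ref{alto}, we may write $h=f^{-2}g$, where $g$ is a $J$-compatible K\"ahler metric on the compact surface $(M,J)$ and $f>0$ is a real holomorphy potential, with $F^+$ a constant multiple of the K\"ahler form $\omega$ of $g$. The argument then splits according to whether or not $f$ is constant.

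First I would dispose of the case in which $f$ is constant. Then $h=f^{-2}g$ is a positive constant times the K\"ahler metric $g$, and so is itself K\"ahler with respect to $J$; since any solution of the Einstein-Maxwell equations has constant scalar curvature, as observed in the introduction, $h$ is in this case a constant-scalar-curvature K\"ahler metric, and the first alternative of the Proposition holds.

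It remains to treat the case in which $f$ is non-constant, where I expect to conclude that $(M,J)$ is rational or ruled. Since $f$ is a real holomorphy potential, $V=\nabla^{1,0}f$ is a holomorphic vector field on $(M,J)$, and it is not identically zero precisely because $f$ is non-constant. On the other hand, since $M$ is compact, the real function $f$ attains both its maximum and its minimum, and at such a critical point $\nabla f=0$, so $V$ vanishes there. Thus $(M,J)$ is a compact complex surface carrying a non-trivial holomorphic vector field that has at least one zero; and, because it supports the K\"ahler metric $g$, it has $b_1$ even. The claim I must establish is that such a surface has Kodaira dimension $-\infty$, and is hence rational or ruled.

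The hard part is this last classification step, which I would carry out via the Enriques-Kodaira classification. Passing to a minimal model $\pi\colon M\to M_{\min}$ (a composition of blow-downs), the holomorphic vector field $V$ pushes forward, by Hartogs extension across the images of the exceptional curves, to a holomorphic vector field $\pi_*V$ on $M_{\min}$, which is again K\"ahler with $b_1$ even. If $M_{\min}$ had non-negative Kodaira dimension, then the only possibilities admitting a non-trivial holomorphic vector field would be complex tori and hyperelliptic (bielliptic) surfaces, on each of which \emph{every} holomorphic vector field is nowhere vanishing, while surfaces of general type, properly elliptic surfaces, and $K3$ and Enriques surfaces carry no non-trivial holomorphic vector field at all. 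In the first situation $\pi_*V$ would be nowhere zero; but a nowhere-zero holomorphic vector field cannot lift to a non-trivial blow-up, since lifting across an exceptional curve forces vanishing at its centre, so $\pi$ would have to be an isomorphism, whereupon $V=\pi_*V$ is nowhere zero, contradicting the existence of a zero of $V$. In the remaining situations $\pi_*V$, and hence $V$, would vanish identically, again a contradiction. We are therefore forced to conclude that $\kappa(M)=-\infty$, and a compact complex surface of Kodaira dimension $-\infty$ with $b_1$ even is rational or ruled, completing the proof.
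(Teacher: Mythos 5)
Your overall strategy is sound, and most of the individual steps are correct: the reduction via Theorem \ref{alto}, the disposal of the constant-$f$ case, the observation that $V=\nabla^{1,0}f$ must vanish at the extrema of $f$, the Hartogs extension of $\pi_*V$ across the images of the exceptional curves, and the fact that a holomorphic vector field lifts to a blow-up only if it vanishes at the centre. The gap lies in your Enriques--Kodaira case analysis: it is \emph{false} that properly elliptic surfaces carry no non-trivial holomorphic vector fields. Take $X=C\times E$ with $C$ of genus $\geq 2$ and $E$ elliptic: then $K_X=p_1^*K_C$, so $\kappa(X)=1$ and $X$ is minimal, K\"ahler, and properly elliptic, while $H^0(X,T^{1,0}X)\cong H^0(E,T^{1,0}E)\cong \CC$ is generated by the translation field of the factor $E$. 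So your dichotomy --- either $M_{\min}$ is a torus or bielliptic (all fields nowhere zero), or else $\pi_*V\equiv 0$ --- omits a genuine case, and the proof as written does not handle a minimal model that is properly elliptic with $\pi_*V\neq 0$. What your argument actually needs is the weaker statement that on \emph{any} minimal K\"ahler surface of non-negative Kodaira dimension, every non-trivial holomorphic vector field is nowhere vanishing. This is true (for properly elliptic surfaces such a field preserves the canonical Iitaka elliptic fibration and acts by translations in the fibres), but it is itself a non-trivial classification fact that must be proved or precisely cited, not asserted; with it in hand, your lifting argument does complete the proof.

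It is worth noting that the paper avoids this delicate case analysis altogether, which is why its proof is shorter and safer. Instead of classifying vector fields on minimal models, it considers the Killing field $\xi=J\nabla f$ and the torus obtained as the closure of its flow in the isometry group of $(M,g)$: if that torus has dimension $>1$, the wedge product of two independent holomorphic vector fields gives a non-trivial holomorphic section of $K^{-1}$ with non-empty zero locus; if it has dimension $1$, the closure of a generic orbit of the group generated by $\xi$ and $\nabla f$ is an embedded rational curve of non-negative self-intersection. Either conclusion forces all plurigenera to vanish, and then the same endgame you use (Kodaira dimension $-\infty$ together with the existence of a K\"ahler metric) yields that $(M,J)$ is rational or ruled. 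Your route, once repaired as above, is a legitimate alternative, but it trades the paper's direct geometric construction for a heavier and riskier reliance on the classification of automorphism groups of minimal surfaces.
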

\begin{proof}
If the holomorphy potential $f$ is non-constant, $\xi =J \nabla f$ is a non-trivial Killing field
for $g$. The flow of $\xi$ is then a connected  Abelian group of isometries of $(M,g)$, the closure of 
which is a torus subgroup of the isometry group which acts on $M$ with non-empty fixed-point set. 
However, the generators are also the real parts of holomorphic vector fields. 
If this torus has dimension $> 1$, one gets a non-trivial holomorphic section of  $K^{-1}$ with non-empty 
zero locus by taking the wedge product of two independent holomorphic vector fields associated with the action. 
 Otherwise, 
$\xi$ is a periodic vector field, and we obtain an embedded rational curve of non-negative self-intersection 
by taking the closure of generic orbit of the group generated by $\xi$ and $\nabla f = -J\xi$. 
Either way, it follows that the plurigenra $p_\ell (M,J)= h^0 (M, \mathcal{O} (K^\ell ))$, $\ell \in \mathbb{N}$, must all vanish, and, since $(M,J)$ also admits a K\"ahler metric $g$, 
 surface classification \cite{bpv,GH} tells us $(M,J)$ is rational or ruled.\end{proof}

Of course, the  Einstein Hermitian metrics \cite{chenlebweb,derd,page} 
on $\CP_2\# \overline{\CP}_2$ and  $\CP_2\#2 \overline{\CP}_2$ provide two more examples, so it
seems certain that the full story will turn out to be rich and interesting. But there is obviously an
enormous 
gulf between the minuscule menagerie of currently known examples and the world of possibilities 
allowed by Proposition \ref{rule}. I can only hope that some interested reader will feel motivated to 
construct some further examples!

The examples constructed in \S \ref{construct} have an intriguing feature that is also worth mentioning here. The constructed metrics $h$ of course all have constant scalar curvature. The question is, 
which of them, if any, are {\em Yamabe metrics}. Recall that Yamabe's program for proving the existence
of constant-scalar-curvature metrics on compact manifolds involves {\em minimizing} the functional \eqref{hilbert} (or the appropriate $n$-dimensional generalization) in any
given conformal class; such metrics always exist \cite{lp,rick}, and are called Yamabe metrics. 
But while every Yamabe metric has constant scalar curvature, not every constant-scalar-curvature
metric is Yamabe. This complication only occurs when the scalar curvature is positive, as is the case 
 for
the examples in question. For precisely this reason, the  {\em Yamabe invariant} of 
$4$-manifolds like $S^2 \times S^2$ remains unknown. Here, the Yamabe invariant $\mathcal{Y}(M)$ 
of a $4$-manifold is obtained  by  
by taking the infimum of \eqref{hilbert} in each conformal class, and then taking the supremum of these infima 
over all conformal classes; equivalently, it is the supremum of the scalar curvatures of all unit-volume
Yamabe metrics on $M$. What is currently known \cite{bwz,okob,lcp2,gl1} strongly 
suggests  that one should have 
$$12\pi\sqrt{2} = \mathcal{Y}(\CP_2)\leq \mathcal{Y}(S^2 \times S^2 )\leq \mathcal{Y}(S^4 )= 8\pi \sqrt{6},$$
but here, for the moment,  the lower bound is merely conjectural. On the other hand, the value 
of the functional \eqref{hilbert} is easy to compute for the constructed metrics $h=g/t^2$
on $\CP_1\times \CP_1$; namely, since the scalar curvature $s_h$ is exactly the constant 
$\mathbf{d}$, one can easily show that
$$s_hV^{1/2} = 8\pi  \frac{\sqrt{6(a^2+ab+b^2)}}{a+b}~.$$
Intriguingly, as $b/a$ ranges over the interval $(1,\infty)$, this exactly sweeps out the interval 
$(12\pi \sqrt{2}, 8\pi \sqrt{6})$ in question. 
Thus,  if any of these metrics is Yamabe, the conjectural lower bound would be established;
and if they are {\em all} Yamabe, the upper bound would be saturated! 

\begin{ack}
The author would like to warmly thank Caner Koca for    conversations which helped stimulate the present piece of research, and 
the anonymous referee for a wonderfully useful  and   detailed   list of suggestions for minor improvements to the article. 
\end{ack}

\end{document}